\numberwithin{equation}{section}
\newtheorem{theorem}{Theorem}
\newtheorem{lemma}{Lemma}
\newtheorem{corollary}[theorem]{Corollary}
\newtheorem{definition}[theorem]{Definition}
\newtheorem{remark}{Remark}
\begin{document}

\title{Generalized Fubini transform with two variables}

\author{Madjid Sebaoui}
\address[M. Sebaoui]{Dr Yahia Far\`{e}s University of M\'{e}d\'{e}a 26000 M\'{e}d\'{e}a, Algeria}
\email{msebaoui@gmail.com, sebaoui.madjid@univ-medea.dz}

\author{Diffalah Laissaoui}
\address[D. Laissaoui]{Dr Yahia Far\`{e}s University of M\'{e}d\'{e}a 26000 M\'{e}d\'{e}a, Algeria}
\email{laissaoui.diffalah74@gmail.com}

\author{Ghania Guettai}
\address[G. Guettai]{Dr Yahia Far\`{e}s University of M\'{e}d\'{e}a 26000 M\'{e}d\'{e}a, Algeria}
\email{guettai78@yahoo.fr}

\author{Mourad Rahmani}
\address[M. Rahmani]{USTHB, Faculty of Mathematics, P.O. Box 32, El Alia 16111, Algiers, Algeria}
\email{mourad.rahmani@gmail.com, mrahmani@usthb.dz}

\begin{abstract}
In the present paper, we define the generalized  Kwang-Wu Chen matrix. Basic properties of this generalization, such as explicit formulas and generating functions are presented. Moreover, we focus on a new class of generalized Fubini polynomials. Then we discuss their relationship with other polynomials such as Fubini, Bell, Eulerian and Frobenius-Euler polynomials.  We have also investigated some basic properties related to the degenerate generalized Fubini polynomials.
\end{abstract}
\keywords{Bell polynomials, degenerate generalized Fubini polynomials, Eulerian polynomials, explicit formulas, Fubini transform, Fubini polynomials, Frobenius-Euler polynomials, generating functions, probabilistic representation, random variable, Stirling numbers.}
\subjclass[2010]{11B73, 05A19, 11B83, 11B68,  60C05 }
\maketitle

\section{Introduction}
The n$th$ Bernoulli numbers $B_{n}$  are defined by the generating
function
\begin{equation}
\frac{t}{e^{t}-1}=
{\displaystyle\sum\limits_{n\geq0}}
B_{n}\frac{t^{n}}{n!},\ \ \left\vert t\right\vert <2\pi.
\end{equation}
The rational numbers $B_{0}=1$, $B_{1}=-1/2$, $B_{2}=1/6$, $B_{3}=0$, $B_{4}=-1/30$ and
$B_{2n+1}=0$ for $n>0$, have many beautiful properties. The most basic
recurrence relation is
\begin{equation}
{\displaystyle\sum\limits_{k=0}^{n}}
\dbinom{n+1}{k}B_{k}=0. \label{A2}%
\end{equation}
In 2001, Kwang-Wu Chen \cite{chen2001algorithms}  gave an algorithm for computing Bernoulli numbers, with
\begin{equation}
a_{0,m}=\frac{1}{m+1};\quad a_{n+1,m}  =-\left(  m+1\right)  a_{n,m+1}+ma_{n,m}.\label{rechen}
\end{equation}

The primary purpose of this paper is to extend the Fubini transform for generalizing Fubini polynomials and studying its properties.
We first generalize (\ref{rechen}). The idea is to construct an infinite matrix $\mathcal{M}:=\left(  a_{n,m}\right)  _{n,m\geq0}$ in which the first row $a_{0,m}:=\alpha_{m}$ of the matrix is the initial sequence and the first column
$a_{n,0}:=\beta_{n}$ is the final sequence.
More precisely, for nonzero complex numbers $x$ and $y$,
we propose to study the following three-term recurrence relation
\begin{equation}
a_{n+1,m}(x,y)  =x\left(  m+1\right)  a_{n,m+1}(x,y)+yma_{n,m}(x,y).\label{rec1}
\end{equation}
By setting $x=-1$ and $y = 1$ in (\ref{rec1}), we get (\ref{rechen}).
More directly, we propose to generalize the Fubini transformation.\\ \\
\indent
The Fubini transform
 of a sequence $\left(  \alpha
_{n}\right)  _{n\geq0}$ is the sequence $\left(  \beta_{n}\right)  _{n\geq0}$
given by
\[
\beta_{n}=
{\displaystyle\sum\limits_{k=0}^{n}}
k!\genfrac{\{}{\}}{0pt}{}{n}{k}
t^{k}\alpha_{k}
\]
and the inverse transform is
\[
\alpha_{n}=
\frac{1}{n! t^{n}}
{\displaystyle\sum\limits_{k=0}^{n}}
s\left(  n,k\right)
\beta_{k\text{ \ }}.
\]

\section{Definitions and notation}
In this section, we introduce some definitions and notations which are useful in the
rest of the paper. Following the usual notations \cite{Knuth}. \\
\indent The falling factorial
$x^{\underline{n}}$ $\left(  x\in\mathbb{C}\right)  $ is defined by
$$x^{\underline{n}}=x\left(  x-1\right)  \cdots\left(
x-n+1\right), x^{\underline{0}}=1$$
and the rising factorial denoted by $x^{\overline {n}}$, is defined by
 $$x^{\overline{n}}=x\left(  x+1\right)  \cdots\left(
x+n-1\right), x^{\overline{0}}=1. $$ 
\indent The (signed) Stirling numbers of the first kind denoted $s\left(  n,k\right)$
are the coefficients in the expansion
\begin{equation}
x^{\underline{n}}=
{\displaystyle\sum\limits_{k=0}^{n}}
s\left(  n,k\right)  x^{k}.\label{firstkind}
\end{equation}
The exponential generating function is
\begin{align}
 \frac{1}{k!}\left(  \ln\left(
1+t\right)  \right)  ^{k}
&=
{\displaystyle\sum\limits_{n\geq k}}
s\left(  n,k\right) \frac{t^{n}}{n!},\label{genefunfirstkind}
\end{align}
and $s\left(  n,k\right)$ satisfy the following recurrence relation:

\begin{equation}
s\left(  n+1,k\right)=  s\left(  n,k-1\right)-ns\left(  n,k\right)\label{relrecufirstkind}
\end{equation}
and that

\begin{center}
  $s\left(  n,0\right) = \delta_{n,0}$ $(n\in \mathbb{N}
),$  $s\left(  n,k\right)=0 $  $(k > n$ or $k<0),$
\end{center}
where $\delta_{n,m}$  denoted Kronecker symbol.\\
\indent   The Stirling numbers of the second kind denoted $
\genfrac{\{}{\}}{0pt}{}{n}{k} $ count
the number of ways to partition a set of $n$ things into  $k$ nonempty subsets. Explicitly
 $
\genfrac{\{}{\}}{0pt}{}{n}{k}
$ are the coefficients in the expansion
\[
x^{n}=
{\displaystyle\sum\limits_{k=0}^{n}}
\genfrac{\{}{\}}{0pt}{}{n}{k}
x^{\underline{k}}.
\]
\indent The $r$-Stirling numbers \cite{Broder} denotes $
\genfrac{\{}{\}}{0pt}{}{n}{k}_{r}$, for any positive $r \in \mathds{N}$, the number of partitions of a set
of $n$ objects into exactly $k$ nonempty, disjoint subsets, such that the first $r$ elements
are in distinct subsets. These numbers obey the recurrence relation
\begin{equation}
\begin{tabular}
[c]{lll}
$
\genfrac{\{}{\}}{0pt}{0}{n}{k}
_{r}=0,$ &  & $n<r,$\\
$
\genfrac{\{}{\}}{0pt}{0}{n}{k}
_{r}=\delta_{k,r},$ &  & $\text{ }n=r,$\\
$
\genfrac{\{}{\}}{0pt}{0}{n}{k}
_{r}=k
\genfrac{\{}{\}}{0pt}{0}{n-1}{k}
_{r}+
\genfrac{\{}{\}}{0pt}{0}{n-1}{k-1}
_{r},$ &  & $n>r$\label{tab}
\end{tabular}
\end{equation}
and
\begin{equation}
\genfrac{\{}{\}}{0pt}{0}{n}{k}
_{r}=
\genfrac{\{}{\}}{0pt}{0}{n}{k}
_{r-1}-(r-1)
\genfrac{\{}{\}}{0pt}{0}{n-1}{k}_{r-1}.\label{tig}
\end{equation}
The exponential generating function is given by
\begin{equation}
\frac{1}{k!}e^{rt}\left(  e^{t}-1\right)  ^{k}=
{\displaystyle\sum\limits_{n\geq k}}
\genfrac{\{}{\}}{0pt}{0}{n+r}{k+r}%
_{r}\frac{t^{n}}{n!}.\label{rstigenexp}
\end{equation}

\section{The Generalized Fubini transform}
\begin{theorem}
Given an initial sequence
$\left(a_{0,m}\right)_{m\geq0}$, define the matrix
$\mathcal{M}$ associated with the initial \ sequence by (\ref{rec1}) then
\begin{enumerate}
\item The entries of the matrix $\mathcal{M}$ are given by
\begin{equation}
a_{n,m}(x,y)  =\frac{1}{m!}
{\displaystyle\sum\limits_{k=0}^{n}}
\genfrac{\{}{\}}{0pt}{}{n+m}{k+m}
_{m}\left(  k+m\right)!y^{n-k}x^{k}
a_{0,m+k}. \label{explicit1}
\end{equation}
\item Suppose that the initial sequence $a_{0,m+r}$ has the following
ordinary generating function
$
A_{r}\left(  t\right)  ={\sum\limits_{k\geq0}}a_{0,k+r}t^{k}
$.
Then, the sequence $\left(  a_{n,r}\left(  x\right)  \right)  _{n\geq0}$ of the
$r$th columns of the matrix $\mathcal{M}$ has an exponential generating
function
$
B_{r}\left(  t; x, y\right)  ={\sum\limits_{n\geq0}}a_{n,r}(x,y)
\dfrac{t^{n}}{n!},
$
given by
\begin{equation}
B_{r}\left(  t; x, y\right) =\frac{e^{rty}}{r!}\left( e^{-ty}\frac{d}{dt}\right)  ^{r}
\left[
\left(\frac{  e^{ty}-1}{y}
\right)^{r}
A_{r}\left(
\frac{x}{y}(e^{ty}-  1)\right)
\right]
.  \label{anis}
\end{equation}
\end{enumerate}
\end{theorem}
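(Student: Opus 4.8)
The plan is to deduce Part 2 from the explicit formula (\ref{explicit1}) of Part 1 together with the exponential generating function (\ref{rstigenexp}) for the $r$-Stirling numbers, which I may take as established. First I would set $m=s$ in (\ref{explicit1}), multiply by $t^{n}/n!$, and sum over $n\geq0$ to assemble $B_{s}\left(t,x,y\right)$. Interchanging the order of summation (legitimate at the level of formal power series), the coefficient attached to each $a_{0,s+k}$ becomes
\[
\frac{(k+s)!}{s!}\,x^{k}\sum_{n\geq k}\genfrac{\{}{\}}{0pt}{}{n+s}{k+s}_{s}\frac{t^{n}}{n!}\,y^{n-k}.
\]
Writing $\dfrac{t^{n}}{n!}y^{n-k}=y^{-k}\dfrac{(ty)^{n}}{n!}$ and applying (\ref{rstigenexp}) with $r=s$ and the rescaled variable $ty$ in place of $t$, the inner sum collapses to $\dfrac{1}{k!}e^{sty}\left(\dfrac{e^{ty}-1}{y}\right)^{k}$. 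Collecting the pieces yields the closed form
\[
B_{s}\left(t,x,y\right)=\frac{e^{sty}}{s!}\sum_{k\geq0}\frac{(k+s)!}{k!}\,a_{0,s+k}\,x^{k}\left(\frac{e^{ty}-1}{y}\right)^{k},
\]
and the remaining task is to recognize this series as the right-hand side of (\ref{anis}).

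The crucial step is a change of variables. Setting $u=u(t)=\dfrac{e^{ty}-1}{y}$, one computes $\dfrac{du}{dt}=e^{ty}$, so that for any function expressed through $u$ we have $e^{-ty}\dfrac{d}{dt}=\dfrac{d}{du}$ as operators. Iterating, the operator appearing in (\ref{anis}) acts as an ordinary $s$th derivative, $\left(e^{-ty}\dfrac{d}{dt}\right)^{s}=\dfrac{d^{s}}{du^{s}}$. Since $A_{s}(w)=\sum_{k\geq0}a_{0,k+s}w^{k}$, substituting $w=\dfrac{x}{y}(e^{ty}-1)=xu$ gives $u^{s}A_{s}(xu)=\sum_{k\geq0}a_{0,k+s}x^{k}u^{k+s}$; differentiating term by term and using $\dfrac{d^{s}}{du^{s}}u^{k+s}=\dfrac{(k+s)!}{k!}u^{k}$ reproduces exactly the series derived above once the prefactor $e^{sty}/s!$ is restored. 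This identifies $B_{s}$ with the expression in (\ref{anis}) and closes the argument.

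I expect the main obstacle to be formal rather than conceptual: one must justify interchanging the $s$-fold differential operator with the infinite sum and confirm that (\ref{rstigenexp}) may be rescaled in $t$ and divided by $y^{k}$ without harm. The delicate bookkeeping is in the powers of $y$ — ensuring that the factor $\left(\dfrac{e^{ty}-1}{y}\right)^{k}$ emerges with the correct normalization — since that is the only place where an exponent could slip. As a sanity check I would test the case $s=0$, where both the operator and the bracketed power reduce to the identity and (\ref{anis}) collapses to $B_{0}\left(t,x,y\right)=A_{0}\!\left(\dfrac{x}{y}(e^{ty}-1)\right)$, in agreement with the series form above. Verifying this endpoint confirms the $y$-bookkeeping and, together with the two steps, completes the proof.
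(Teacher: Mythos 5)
Your argument for Part 2 is correct and is essentially the route the paper takes: set $m=s$ in (\ref{explicit1}), sum against $t^{n}/n!$, interchange the order of summation, apply (\ref{rstigenexp}) at the rescaled argument $ty$, and then recognize the resulting series $\frac{e^{sty}}{s!}\sum_{k\geq0}\frac{(k+s)!}{k!}a_{0,s+k}x^{k}\bigl(\frac{e^{ty}-1}{y}\bigr)^{k}$ as the action of $\bigl(e^{-ty}\frac{d}{dt}\bigr)^{s}$ on the bracketed expression in (\ref{anis}). Your justification of that last step via the substitution $u=(e^{ty}-1)/y$, so that $e^{-ty}\frac{d}{dt}=\frac{d}{du}$ and $\frac{d^{s}}{du^{s}}u^{k+s}=\frac{(k+s)!}{k!}u^{k}$, is actually cleaner than the paper's, which asserts the identity $\binom{k+s}{s}\bigl[\frac{x}{y}(e^{ty}-1)\bigr]^{k}=\frac{1}{s!x^{s}}\bigl(e^{-ty}\frac{d}{dt}\bigr)^{s}\bigl[\frac{x}{y}(e^{ty}-1)\bigr]^{k+s}$ without proof; your change of variables is precisely what proves it. The $y$-bookkeeping and the $s=0$ sanity check are also right.

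The genuine gap is that you never prove Part 1. The theorem makes two claims, and the explicit formula (\ref{explicit1}) --- on which your entire derivation of (\ref{anis}) rests --- is itself one of them, so it cannot simply be ``taken as established.'' The paper proves it by induction on $n$: the base case $n=0$ reduces to $a_{0,m}=\frac{1}{m!}\genfrac{\{}{\}}{0pt}{}{m}{m}_{m}\,m!\,a_{0,m}$ using $\genfrac{\{}{\}}{0pt}{}{m}{m}_{m}=1$, and the inductive step substitutes the claimed formula into the recurrence $a_{n+1,m}=x(m+1)a_{n,m+1}+yma_{n,m}$, reindexes the two sums, and collapses them into a single sum by means of the $r$-Stirling recurrences (\ref{tab}) and (\ref{tig}). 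Until you supply this induction (or some other derivation of (\ref{explicit1})), your proof of the theorem is incomplete: Part 2 as you have written it is a correct conditional statement, not a proof of the full result.
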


\begin{proof}
\begin{enumerate}
\item  We prove the relation (\ref{explicit1}) by induction on $n$. The result clearly holds for $n = 0$,
 we now show that the formula for $n + 1$ follows from (\ref{rec1}) and induction hypothesis
\begin{align*}
a_{n+1,m}(x,y)
=& \frac{1}{m!}
{\displaystyle\sum\limits_{k=0}^{n-1}}
\genfrac{\{}{\}}{0pt}{}{n+m+1}{k+m+1}
_{m+1}\left(  k+m+1\right)  !x^{k+1}y^{n-k}
a_{0,m+k+1}\\+&
m
\genfrac{\{}{\}}{0pt}{}{n+m}{m}_{m}
y^{n+1}a_{0,m}+ \frac{1}{(m-1)!}
{\displaystyle\sum\limits_{k=1}^{n}}
\genfrac{\{}{\}}{0pt}{}{n+m}{k+m}
_{m}\left(  k+m\right)  !x^{k}y^{n-k+1}
a_{0,m+k}
\\
+& \frac{1}{m!}\genfrac{\{}{\}}{0pt}{}{n+m+1}{n+m+1}
_{m+1}\left(  n+m+1\right)  !x^{n+1}
a_{0,m+n+1}.\\
\end{align*}

After some rearrangements, we get
\begin{align*}
a_{n+1,m}(x,y)
=& \frac{1}{m!}
{\displaystyle\sum\limits_{k=1}^{n}}
\genfrac{\{}{\}}{0pt}{}{n+m+1}{k+m}
_{m+1}\left(  k+m\right)  !x^{k}y^{n-k+1}
a_{0,m+k}+
m
\genfrac{\{}{\}}{0pt}{}{n+m}{m}
_{m}y^{n+1}
a_{0,m}
\\
+& \frac{1}{(m-1)!}
{\displaystyle\sum\limits_{k=1}^{n}}
\genfrac{\{}{\}}{0pt}{}{n+m}{k+m}
_{m}\left(  k+m\right)  !x^{k}y^{n-k+1}a_{0,m+k}
\\
+& \frac{1}{m!}\genfrac{\{}{\}}{0pt}{}{n+m+1}{n+m+1}
_{m+1}\left(  n+m+1\right)  !x^{n+1}a_{0,m+n+1}.
\end{align*}
From (\ref{tab}) and (\ref{tig}),  and after some rearrangements, we get

\begin{align*}
a_{n+1,m}(x,y)
=& \frac{1}{m!}
{\displaystyle\sum\limits_{k=1}^{n}}
\left(
\genfrac{\{}{\}}{0pt}{}{n+m+1}{k+m}_{m+1}+m\genfrac{\{}{\}}{0pt}{}{n+m}{k+m}
_{m}\right)
\left(  k+m\right)  !x^{k}y^{n-k+1}
a_{0,m+k}
\\
+&
\genfrac{\{}{\}}{0pt}{}{n+m+1}{m}
_{m}y^{n+1}
a_{0,m}
+ \frac{1}{m!}\genfrac{\{}{\}}{0pt}{}{n+m+1}{n+m+1}
_{m}\left(  n+m+1\right)  !x^{n+1}a_{0,m+n+1}\\
=&\frac{1}{m!}
{\displaystyle\sum\limits_{k=0}^{n+1}}
\genfrac{\{}{\}}{0pt}{}{n+m+1}{k+m}_{m}
\left(  k+m\right)  !x^{k}y^{n-k+1}
a_{0,m+k}.
\end{align*}
\item The verification of (\ref{anis}) follows by induction on $n$. By using
(\ref{explicit1}), we obtain
\begin{align}
B_{r}\left(  t; x, y\right)   &  ={\sum\limits_{n\geq0}}\left(
\frac{1}{r!}
{\displaystyle\sum\limits_{k=0}^{n}}
\genfrac{\{}{\}}{0pt}{}{n+r}{k+r}
_{r}\left(  k+r\right)  !x^{k}y^{n-k}
a_{0,r+k}
\right)
\dfrac{t^{n}}
{n!}\nonumber\\
 &  ={\sum\limits_{k\geq0}}
\frac{\left(  k+r\right)!}{r!}\left(\frac{x}{y}\right)^{k}
a_{0,r+k}
{\displaystyle\sum\limits_{n\geq k}}
\genfrac{\{}{\}}{0pt}{}{n+r}{k+r}_{r}
\dfrac{\left(ty\right)^{n}}{n!}\nonumber.
\end{align}

From the relation (\ref{rstigenexp}), we obtain
\begin{align}
B_{r}\left(  t; x, y\right)
 &  ={\sum\limits_{k\geq0}}\frac{\left(  k+r\right)!}{r!}\left(\frac{x}{y}\right)^{k}
a_{0,r+k}
\dfrac{1}
{k!}e^{rty}\left(e^{ty}-1\right)^{k}\nonumber
\\
 &=e^{rty}{\sum\limits_{k\geq0}}\dbinom{k+r}{r}a_{0,r+k}
\left(\frac{x}{y}\left(e^{ty}-1\right)\right)^{k}\nonumber.
\end{align}

Since
\[
\binom{k+r}{r}\left[\frac{x}{y}\left(  e^{ty}-1\right)\right]  ^{k}=
\frac{1}{r!x^{r}}\left( e^{-ty}\frac{d}{dt}\right)  ^{r}\left[\frac{x}{y}\left(  e^{ty}-1\right)\right]
^{k+r} ,
\]
we get
\[
B_{r}\left(  t; x, y\right) =\frac{e^{rty}}{r!}\left( e^{-ty}\frac{d}{dt}\right)^{r}
\left[
\left(\frac{  e^{ty}-1}{y}
\right)^{r}
A_{r}\left(
\frac{x}{y}(e^{ty}-  1)\right)
\right].
\]
This evidently completes the proof of Theorem.
\end{enumerate}
\end{proof}
The following corollary represents another expression for the generating function $B_{r}$
\begin{corollary}
\begin{equation}
B_{r}\left(  t; x, y\right)  = \frac{1}{r!}
{\displaystyle\sum\limits_{k=0}^{r}}
s\left(  r,k\right)
 \frac{d^{k}}{dt^{k}}\left[
\left(\frac{  e^{ty}-1}{y}
\right)^{r}
A_{r}\left(
\frac{x}{y}(e^{ty}-  1)\right)
\right].
\label{secondexpressionforB_s}
\end{equation}
\end{corollary}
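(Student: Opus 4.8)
The plan is to reduce the corollary to a single operator identity and then expand that operator with the Stirling numbers of the first kind. Write $D=\frac{d}{dt}$ and abbreviate $F(t)=\left(\frac{e^{ty}-1}{y}\right)^{s}A_{s}\!\left(\frac{x}{y}(e^{ty}-1)\right)$, so that (\ref{anis}) reads $B_{s}=\frac{1}{s!}\,e^{sty}(e^{-ty}D)^{s}F$. Since (\ref{secondexpressionforB_s}) acts on the \emph{same} bracket $F$, the whole content of the statement is the operator identity
\[
e^{sty}\left(e^{-ty}D\right)^{s}=\sum_{k=0}^{s}s(s,k)\,y^{s-k}D^{k},
\]
after which one simply applies both sides to $F$ and divides by $s!$. (In the normalization $y=1$ the coefficients $s(s,k)y^{s-k}$ collapse to $s(s,k)$, matching (\ref{secondexpressionforB_s}) verbatim.)

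First I would prove the intermediate identity $e^{sty}(e^{-ty}D)^{s}=\prod_{j=0}^{s-1}(D-jy)$. The slickest route is the change of variable $u=e^{ty}$: from $\frac{du}{dt}=yu$ one gets $e^{-ty}D=y\,\frac{d}{du}$ and $e^{sty}=u^{s}$, hence $e^{sty}(e^{-ty}D)^{s}=y^{s}\,u^{s}\frac{d^{s}}{du^{s}}$. Now the classical Euler identity $u^{s}\frac{d^{s}}{du^{s}}=\vartheta(\vartheta-1)\cdots(\vartheta-s+1)$ with $\vartheta=u\frac{d}{du}=y^{-1}D$ turns the right-hand side into $\prod_{j=0}^{s-1}(D-jy)$. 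Equivalently, and perhaps more transparently for a self-contained write-up, I would set $P_{s}:=e^{sty}(e^{-ty}D)^{s}$ and prove $P_{s+1}=(D-sy)P_{s}$ by induction: using $(e^{-ty}D)^{s}=e^{-sty}P_{s}$ and $D\big(e^{-sty}P_{s}F\big)=e^{-sty}(D-sy)P_{s}F$, the exponentials cancel and the product $\prod_{j=0}^{s-1}(D-jy)$ emerges.

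Second, I would expand this ``step-$y$ falling factorial'' in powers of $D$. Applying the defining expansion (\ref{firstkind}), $X^{\underline{s}}=\sum_{k}s(s,k)X^{k}$, to the commuting symbol $D/y$ and multiplying by $y^{s}$ gives $\prod_{j=0}^{s-1}(D-jy)=\sum_{k=0}^{s}s(s,k)y^{s-k}D^{k}$. Feeding this into $B_{s}=\frac{1}{s!}\,e^{sty}(e^{-ty}D)^{s}F$ produces (\ref{secondexpressionforB_s}).

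The only genuine obstacle is the operator identity $e^{sty}(e^{-ty}D)^{s}=\prod_{j=0}^{s-1}(D-jy)$; the non-commutativity of $e^{-ty}$ and $D$ means the $s$-fold composition must be unwound carefully (either by the substitution $u=e^{ty}$, where everything becomes the constant-coefficient operator $u^{s}\frac{d^{s}}{du^{s}}$, or by the induction above). Once that is secured, the Stirling expansion is routine bookkeeping, and I would sanity-check it at $s=1,2$, where the identity reads $D$ and $D^{2}-yD$, against $s(1,1)=1$ and $s(2,1)=-1,\ s(2,2)=1$.
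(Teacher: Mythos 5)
Your argument is correct and follows the same basic strategy the paper intends --- both reduce the corollary to expanding the conjugated operator $e^{sty}\left(e^{-ty}\frac{d}{dt}\right)^{s}$ in powers of $\frac{d}{dt}$ with Stirling numbers of the first kind as coefficients --- but you have in fact proved a \emph{corrected} version of the statement. Writing $D=\frac{d}{dt}$, the paper offers only the one-line assertion
\[
\left(e^{-ty}D\right)^{s}F(t)=e^{-sty}\sum_{k=0}^{s}s(s,k)\,D^{k}F(t),
\]
with no proof, and this identity (hence the corollary as printed) fails for general $y$: already at $s=2$ a direct computation gives $e^{2ty}\left(e^{-ty}D\right)^{2}F=F''-yF'$, whereas the asserted right-hand side yields $F''-F'$. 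Your identity
\[
e^{sty}\left(e^{-ty}D\right)^{s}=\prod_{j=0}^{s-1}\left(D-jy\right)=\sum_{k=0}^{s}s(s,k)\,y^{s-k}D^{k}
\]
is the correct one; the printed formula (\ref{secondexpressionforB_s}) is missing the factors $y^{s-k}$ and is valid only in the normalization $y=1$, exactly as you note parenthetically. Your proof of the operator identity via the substitution $u=e^{ty}$, under which $e^{-ty}D=y\frac{d}{du}$ and Euler's identity $u^{s}\frac{d^{s}}{du^{s}}=\vartheta^{\underline{s}}$ applies (or equivalently the recursion $P_{s+1}=(D-sy)P_{s}$), supplies precisely the step the paper omits, and in doing so exposes the missing powers of $y$; the subsequent expansion via (\ref{firstkind}) applied to $D/y$ is routine and correct. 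The one improvement I would suggest is to state explicitly up front that you are proving the corrected formula rather than the printed one, so the discrepancy is not mistaken for a slip on your part.
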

To prove formula (\ref{secondexpressionforB_s}) using
\[
\left(
e^{-ty}\frac{d}{dt}\right)  ^{r}F\left(  t\right)=
e^{-rty}
{\displaystyle\sum\limits_{k=0}^{r}}
 \genfrac{[}{]}{0pt}{}{r}{k} \frac{d^{k}}{dt^{k}}F\left(  t\right),
\]
with
\[
F\left(  t\right)=
\left(\frac{  e^{ty}-1}{y}
\right)^{r}
A_{r}\left(
\frac{x}{y}(e^{ty}-  1)\right).
\]
\begin{theorem}
Given final sequence $\left(a_{n,0}\right)_{n\geq0}$, define the matrix $\mathcal{M}$
associated with the final sequence by
\begin{equation}
a_{n,m+1}(x, y)  =\frac{1}{x\left(m+1\right)}\left(  a_{n+1,m}(x, y)
-ym  a_{n,m}(x, y) \right),  \label{rec2}
\end{equation}
 then
\begin{enumerate}
\item The entries of the matrix $\mathcal{M}$ are given by
\begin{equation}
a_{n,m} \left(x, y\right) =\frac{y^{m}}{x^{m}m!}
{\displaystyle\sum\limits_{k=0}^{m}}
y^{-k}
s\left(  m,k\right)
a_{n+k,0\text{
\ }}. \label{explicit2}
\end{equation}
\item Suppose that the final sequence $a_{n+r,0}$ has the following
exponential generating function $
\mathcal{\widehat{B}}_{r}\left(  t\right)  ={\sum\limits_{k\geq0}}a_{k+r,0}\frac
{t^{k}}{k!}$.
Then, the sequence $\left(  a_{r,m}\left(  x\right)  \right)  _{m\geq0}$ of the
$r$th row of the matrix $\mathcal{M}$ has an ordinary generating function
$
\mathcal{\widehat{A}}_{r}\left(  t; x, y\right)  ={\sum\limits_{m\geq0}}a_{r,m}(x, y)  t^{m},
$ given by
\begin{equation}
\mathcal{\widehat{A}}_{r}\left( t; x, y\right)  =
\mathcal{\widehat{B}}_{r}\left( y^{-1}\ln\left(1+ \frac{ty}{x}\right)\right).
\end{equation}
\end{enumerate}
\end{theorem}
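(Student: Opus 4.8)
The recurrence (\ref{rec2}) is exactly (\ref{rec1}) solved for $a_{n,m+1}$, so Theorem~2 is the inverse counterpart of Theorem~1, and the natural plan is to mirror that proof. I would establish the closed form (\ref{explicit2}) in part~(1) by induction on the column index $m$, and then read off the generating-function identity in part~(2) by feeding (\ref{explicit2}) into the definition of $\widehat{\mathcal{A}}_s$ and invoking the exponential generating function (\ref{genefunfirstkind}) of the Stirling numbers of the first kind. I would not try to guess (\ref{explicit2}) from scratch but simply verify it inductively; the appearance of $s(m,k)$ is already anticipated by the inverse Fubini transform recorded in the introduction.

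For part~(1), the base case $m=0$ is immediate because $s(0,0)=\delta_{0,0}=1$ collapses (\ref{explicit2}) to $a_{n,0}=a_{n,0}$. Assuming (\ref{explicit2}) for a fixed $m$ and all $n$, I would substitute the two instances $a_{n+1,m}$ and $a_{n,m}$ into (\ref{rec2}), pull the common factor $y^{m}/\bigl(x^{m+1}(m+1)!\bigr)$ outside, and be left with the bracket
\[
\sum_{k=0}^{m} y^{-k}\, s(m,k)\, a_{n+1+k,0} \;-\; ym\sum_{k=0}^{m} y^{-k}\, s(m,k)\, a_{n+k,0}.
\]
The decisive manipulation is the shift $k\mapsto k-1$ in the first sum, so that both sums are indexed on the common argument $a_{n+k,0}$; this also produces one extra power of $y$, which combines with the prefactor $y^{m}$ to give the correct $y^{m+1}$. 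After extracting that $y$, the coefficient of $y^{-k}a_{n+k,0}$ becomes $s(m,k-1)-m\,s(m,k)$, which is precisely $s(m+1,k)$ by the first-kind recurrence (\ref{relrecufirstkind}). The boundary values match as well, since $s(m+1,0)=\delta_{m+1,0}=0=-m\,s(m,0)$ and $s(m+1,m+1)=s(m,m)=1$, completing the induction and yielding (\ref{explicit2}) with $m$ replaced by $m+1$.

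For part~(2), I would insert (\ref{explicit2}) with $n=s$ into $\widehat{\mathcal{A}}_s(t)=\sum_{m\geq0}a_{s,m}t^{m}$ and set $u=ty/x$, so that the factor $t^{m}y^{m}/x^{m}$ collapses to $u^{m}$ and
\[
\widehat{\mathcal{A}}_s(t)=\sum_{m\geq0}\frac{u^{m}}{m!}\sum_{k=0}^{m}y^{-k}\,s(m,k)\,a_{s+k,0}.
\]
Interchanging the order of summation, the inner sum over $m\geq k$ is exactly $\sum_{m\geq k}s(m,k)\,u^{m}/m! = \bigl(\ln(1+u)\bigr)^{k}/k!$ by (\ref{genefunfirstkind}). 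Collecting the result gives
\[
\widehat{\mathcal{A}}_s(t)=\sum_{k\geq0}\frac{a_{s+k,0}}{k!}\left(\frac{\ln(1+u)}{y}\right)^{\!k},
\]
which is $\widehat{\mathcal{B}}_s$ evaluated at $y^{-1}\ln\!\bigl(1+ty/x\bigr)$ once $u=ty/x$ is restored, as claimed.

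I expect no genuine obstacle here; the only point requiring care is the index bookkeeping in the inductive step, where the shift $k\mapsto k-1$ must be arranged so that the two sums align on $a_{n+k,0}$ and the surviving coefficient reproduces the Stirling recurrence exactly at both boundaries $k=0$ and $k=m+1$. The interchange of summation in part~(2) is harmless, since $s(m,k)=0$ for $m<k$ forces each coefficient of $u^{m}$ to be a finite sum, so the manipulation is purely formal and needs no analytic justification.
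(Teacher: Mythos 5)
Your proposal is correct and follows essentially the same route as the paper: induction on $m$ for (\ref{explicit2}), with the shift $k\mapsto k-1$ and the first-kind recurrence (\ref{relrecufirstkind}) combining the two sums, and for part (2) the substitution of (\ref{explicit2}) followed by an interchange of summation and the generating function (\ref{genefunfirstkind}). Your bookkeeping of the extra factor of $y$ and the boundary cases $k=0$ and $k=m+1$ matches what the paper does implicitly.
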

\begin{proof}
\begin{enumerate}
\item
We prove by induction on $m$, the result clearly holds for $m  =0$. By induction hypothesis and  (\ref{rec2}), we have

\begin{align*}
a_{n,m+1}(x, y)
&  =\frac{y^{m}}{x^{m+1}\left(m+1\right)!}
\left(
{\displaystyle\sum\limits_{k=0}^{m}}y^{-k}
s\left(  m,k\right)
a_{n+k+1,0\text{
\ }}-ym
{\displaystyle\sum\limits_{k=0}^{m}}y^{-k}
s\left(  m,k\right)
a_{n+k,0\text{
\ }}\right)  \\
&  =\frac{y^{m}}{x^{m+1}\left(m+1\right)!}
\left(
y^{-m}s\left(  m,m\right)
a_{n+m+1,0\text{
\ }}+
{\displaystyle\sum\limits_{k=0}^{m-1}}
y^{-k}s\left(  m,k\right)
a_{n+k+1,0\text{
\ }} \right) \\
&-\frac{y^{m}}{x^{m+1}\left(m+1\right)!}
\left( m {\displaystyle\sum\limits_{k=1}^{m}}
y^{-k+1}s\left(  m,k\right)
a_{n+k,0\text{
\ }}+my^{}s\left(  m,0\right)
a_{n,0\text{
\ }}\right).
\end{align*}

After some rearrangements, we get

\begin{align*}
a_{n,m+1}(x, y)
&  =\frac{y^{m}}{x^{m+1}\left(m+1\right)!}
\left(
{\displaystyle\sum\limits_{k=1}^{m}}
y^{-k+1}s\left(  m,k-1\right)
a_{n+k,0\text{
\ }}-m {\displaystyle\sum\limits_{k=1}^{m}}
y^{-k+1}s\left(  m,k\right)
a_{n+k,0\text{
\ }} \right) \\
&+\frac{y^{m}}{x^{m+1}\left(m+1\right)!}
\left(
y^{-m}s\left(  m,m\right)
a_{n+m+1,0\text{
\ }}-my^{}s\left(  m,0\right)
a_{n,0\text{
\ }}\right).
\end{align*}

From (\ref{relrecufirstkind}) and after some rearrangements, we get
\begin{align*}
a_{n,m+1}(x, y)
&  =\frac{y^{m+1}}{x^{m+1}\left(m+1\right)!}
{\displaystyle\sum\limits_{k=1}^{m}}
y^{-k}
\left( s\left(  m,k-1\right)
-ms\left(  m,k\right)
\right)
a_{n+k,0\text{
\ }} \\
&+\frac{y^{m+1}}{x^{m+1}\left(m+1\right)!}
\left(
y^{-m-1}s\left(  m+1,m+1\right)
a_{n+m+1,0\text{
\ }}+s\left(  m+1,0\right)
a_{n,0\text{
\ }} \right)\\
&  =\frac{y^{m+1}}{x^{m+1}\left(m+1\right)!}
{\displaystyle\sum\limits_{k=0}^{m+1}}
y^{-k}s\left(  m+1,k\right)
a_{n+k,0\text{\ }}.
\end{align*}
which completes the proof.
\item According to (\ref{explicit2}), we have
\begin{align*}
\mathcal{\widehat{A}}_{r}\left( t; x, y\right)   &  ={\sum\limits_{m\geq0}}\left(  \frac{y^{m}}{x^{m}m!}
{\displaystyle\sum\limits_{k=0}^{m}}
y^{-k}s\left(  m,k\right)
a_{r+k,0\text{
\ }}\right)  t^{m}\\
&  ={\sum\limits_{k\geq0}}a_{r+k,0\text{
\ }}y^{-k}{\sum\limits_{m\geq k}}\frac{y^{m}}{x^{m}m!}
s\left(  m,k\right)
 t^{m}.
\end{align*}

From the relation (\ref{genefunfirstkind}), we obtain

\begin{align*}
\mathcal{\widehat{A}}_{r}\left( t; x, y\right)
&  ={\sum\limits_{k\geq0}}a_{r+k,0\text{
\ }}y^{-k}
\frac{1}{k!}\left(  \ln\left(
1+\frac{ty}{x}\right)  \right) ^{k}\\
&= \mathcal{\widehat{B}}_{r}\left( y^{-1}\ln\left(1+ \frac{ty}{x}\right)\right),
\end{align*}

which completes the proof.
\end{enumerate}
\end{proof}
\begin{corollary}
\label{Co1}For $n,m\geq0$, we have%
\begin{equation}
{\displaystyle\sum\limits_{k=0}^{n}}
\genfrac{\{}{\}}{0pt}{}{n+m}{k+m}
_{m}\left(  k+m\right)  !x^{k}y^{n-k}
a_{0,m+k}=
{\displaystyle\sum\limits_{k=0}^{m}}
s\left(  m,k\right)
x^{-m}y^{m-k}a_{n+k,0\text{
\ }} .\label{cora}
\end{equation}
\end{corollary}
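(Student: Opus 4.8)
The plan is to read the two sides of (\ref{cora}) as two distinct closed forms for one and the same quantity, namely $m!\,a_{n,m}(x,y)$, one coming from the initial-sequence description of $\mathcal{M}$ and the other from the final-sequence description. The key preliminary observation is that the matrix $\mathcal{M}$ is governed by the single three-term relation (\ref{rec1}); solving (\ref{rec1}) for $a_{n,m+1}$ and dividing by $x(m+1)$, which is legitimate since $x\neq 0$, produces precisely (\ref{rec2}). Hence (\ref{rec1}) and (\ref{rec2}) describe the same array, and both explicit formulas (\ref{explicit1}) and (\ref{explicit2}) evaluate the identical entry $a_{n,m}(x,y)$.

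First I would multiply (\ref{explicit1}) by $m!$. This clears the prefactor $1/m!$ and leaves exactly the left-hand side of (\ref{cora}),
$$ m!\,a_{n,m}(x,y)=\sum_{k=0}^{n}\genfrac{\{}{\}}{0pt}{}{n+m}{k+m}_{m}(k+m)!\,x^{k}y^{n-k}a_{0,m+k}. $$
Next I would multiply (\ref{explicit2}) by $m!$ and fold the factor $y^{m}/x^{m}$ into the summand. Distributing $y^{m}$ against the $y^{-k}$ turns it into $y^{m-k}$, while the $x^{-m}$ is constant in $k$, so
$$ m!\,a_{n,m}(x,y)=\sum_{k=0}^{m}s(m,k)\,x^{-m}y^{m-k}a_{n+k,0}, $$
which is the right-hand side of (\ref{cora}). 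Equating the two expressions for $m!\,a_{n,m}(x,y)$ yields the claimed identity.

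There is essentially no hard step here: the statement is a consistency relation between the row-indexed and column-indexed evaluations of the same matrix, so the only care required is the bookkeeping of the powers of $x$ and $y$ when the two denominators are cleared. The one conceptual point worth stressing is that, because (\ref{explicit1}) and (\ref{explicit2}) hold for arbitrary admissible data, the resulting identity (\ref{cora}) is a genuine relation linking the $r$-Stirling numbers of the second kind to the Stirling numbers of the first kind, valid for every choice of the boundary sequences $\left(a_{0,m+k}\right)_{k}$ and $\left(a_{n+k,0}\right)_{k}$ attached to a common matrix $\mathcal{M}$.
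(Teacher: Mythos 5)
Your proposal is correct and matches the paper's (implicit) argument: the corollary is stated immediately after the two theorems precisely because it follows by clearing the factor $1/m!$ from (\ref{explicit1}) and the factor $y^{m}/(x^{m}m!)$ from (\ref{explicit2}) and equating the two resulting expressions for $m!\,a_{n,m}(x,y)$, the two recurrences (\ref{rec1}) and (\ref{rec2}) being rearrangements of one another (using $x\neq0$) and hence defining the same matrix. Your bookkeeping of the powers of $x$ and $y$ is accurate, so nothing further is needed.
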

The identity (\ref{cora}) can be viewed as the generalized Fubini transform which can be reduced, for $m=0$  to the Fubini transform of the sequence $\alpha_{n}
$, and for $n=0$  to the inverse Fubini transform of the sequence $\beta_{m}$.

\section{On generalized Fubini polynomials}
Setting the initial sequence $a_{0,m}=1$ in (\ref{rec1}), we get the
following matrix
\small{
\[
\mathcal{M}=
\begin{pmatrix}
1 & 1 & 1 & 1 & \cdots\\
x & y+2x & 2y+3x & 3y+4x & \cdots\\
2x^{2}+yx & 6x^{2}+6yx+y^{2} & 12x^{2}+15xy+4y^{2} & \vdots & \\
6x^{3}+6x^{2}y+xy^{2} & 24x^{3}+36x^{2}y+14xy^{2}+y^{3} & \vdots &  & \\
24x^{4}+36x^{3}y+14x^{2}y^{2}+xy^{3} & \vdots &  &  & \\
\vdots &  &  &  &
\end{pmatrix}.
\]
}
\normalsize
Since $A_{r}\left(  t\right)  =\frac{1}{1-t},$ it follows from (\ref{anis}) and (\ref{secondexpressionforB_s}) that the
final sequence has an exponential generating function given by
\begin{align*}
B_{r}\left( t,x,y\right)  &=\frac{e^{rty}}{r!} \left( e^{-ty}\frac{d}{dt}\right)  ^{r} \left[
\left(\frac{  e^{ty}-1}{y}\right) ^{r}
\frac{1}{1-\frac{x}{y}\left(  e^{ty}-1\right)}\right]  \\
=&\frac{1}{r!}
{\displaystyle\sum\limits_{k=0}^{r}}
s\left(  r,k\right)
 \frac{d^{k}}{dt^{k}}\left[
\left(\frac{  e^{ty}-1}{y}\right) ^{r}
\frac{1}{1-\frac{x}{y}\left(  e^{ty}-1\right)}\right].
\end{align*}
In particular for $r=0$, we have
\begin{align*}
B_{0}\left(  t;x,y\right)   &  = \frac{1}{1-\frac{x}{y}\left(  e^{ty}-1\right)}.
\end{align*}
\begin{definition}
We defined a sequence of polynomials $\mathfrak{F}_{n} (x, y)$ of two variables $x, y$, called generalized Fubini polynomials, by means of
the generating function
\begin{align}
\frac{1}{1-\frac{x}{y}(e^{ty}-1)}&=
{\displaystyle\sum\limits_{n\geq 0}}
\mathfrak{F}_{n} \left(  x, y\right)  \frac{t^{n}}{n!}.\label{genefunG(x,y)2var}
\end{align}
\end{definition}
The explicit formula for $\mathfrak{F}_{n} \left(  x, y\right)$ is given by
\begin{align}
\mathfrak{F}_{n} \left(  x, y\right)   = {\displaystyle\sum\limits_{k=0}^{n}}\genfrac{\{}{\}}{0pt}{}{n}{k}
k!x^{k}y^{n-k} .\label{explicitFubini}
\end{align}
By setting $y=1$ in \eqref{genefunG(x,y)2var}, we get
\begin{align}
\frac{1}{1-x\left(e^{t}-  1\right)} &=
{\displaystyle\sum\limits_{n\geq 0}}
\mathfrak{F}_{n} \left(  x, 1\right)  \frac{t^{n}}{n!}  \nonumber\\
&  ={\sum\limits_{n\geq0}}\omega_{n}\left(  x\right)  \dfrac{t^{n}}{n!},\label{genefunFubbini}
\end{align}
where $\omega_{n}\left(  x\right)$ denotes the  Fubini polynomials \cite{boyadzhiev2005series,kargin2017some,qi2019determinantal}, defined by
\[
\omega_{n}\left(  x\right)  =
{\displaystyle\sum_{k=0}^{n}}
\genfrac{\{}{\}}{0pt}{}{n}{k}k!
x^{k}.
\]
By (\ref{genefunG(x,y)2var}) and (\ref{genefunFubbini}), we can write the relation between $\omega_{n}\left(  x\right)$ and $\mathfrak{F}_{n} \left(  x, y\right)$, given by the following two formulas

\begin{align}
\mathfrak{F}_{n} \left(  x, y\right)=y^{n}\omega_{n}\left(  \frac{x}{y}\right)
\end{align}
and
\begin{align}
\omega_{n}\left(  x\right)=y^{-n}\mathfrak{F}_{n} \left(  xy, y\right).
\end{align}
\indent The Fubini polynomials $\omega_{n}\left(  x\right)$ are related to the geometric series in the following way \cite{boyadzhiev2005series,boyadzhiev2016geometric}
\begin{equation}
\left(x \frac{d}{dx}\right)^{n} \frac{1}{1-x}
={\displaystyle\sum\limits_{k\geq0}}
x ^{k}    k^{n}
=
\frac{1}{1-x}
\omega_{n} \left( \frac{x}{1-x}\right).\label{operatorboyadzhiev}
\end{equation}
This relation can be extended to a more general form depending on two variables $x$ and $y$.
\begin{theorem}
 For $x$ different to $y$, the polynomials $\mathfrak{F}_{n} \left(  x, y\right)$ have the following property
\begin{equation}
\frac{y}{y-x}
\mathfrak{F}_{n} \left(  \frac{ xy}{y-x}, y\right)
={\displaystyle\sum\limits_{k\geq0}}
\left(\frac{x}{y} \right)^{k} \left(   yk\right)^{n}
=y^{n}\left(x \frac{d}{dx}\right)^{n} \frac{y}{y-x}.\label{operatorgeneralized}
\end{equation}
\end{theorem}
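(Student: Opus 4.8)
The plan is to reduce the two-variable identity to the known one-variable relation (\ref{operatorboyadzhiev}) by exploiting the scaling relation $\mathfrak{F}_n(x,y)=y^n\omega_n(x/y)$ together with the homogeneity of the Euler operator $x\frac{d}{dx}$. I would prove the chain of three equalities by establishing the rightmost equality and the leftmost equality separately, each meeting the common middle sum $\sum_{k\geq0}(x/y)^k(yk)^n$.

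First I would treat the equality between the middle sum and the right-hand side. Expanding $\frac{y}{y-x}=\frac{1}{1-x/y}=\sum_{k\geq0}(x/y)^k$ as a geometric series (valid for $|x|<|y|$), and using that each monomial is an eigenfunction of the Euler operator, $\left(x\frac{d}{dx}\right)(x/y)^k=k\,(x/y)^k$, termwise differentiation gives $\left(x\frac{d}{dx}\right)^n\frac{y}{y-x}=\sum_{k\geq0}k^n(x/y)^k$. Multiplying by $y^n$ and absorbing it into the factor $k^n$ yields $y^n\left(x\frac{d}{dx}\right)^n\frac{y}{y-x}=\sum_{k\geq0}(x/y)^k(yk)^n$, which is exactly the middle expression.

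Next I would handle the equality between the middle sum and the left-hand side. Setting $u=x/y$ in (\ref{operatorboyadzhiev}) gives $\sum_{k\geq0}u^k k^n=\frac{1}{1-u}\omega_n\left(\frac{u}{1-u}\right)$; substituting back $u=x/y$ and simplifying the rational expressions $\frac{1}{1-u}=\frac{y}{y-x}$ and $\frac{u}{1-u}=\frac{x}{y-x}$ turns this into $\sum_{k\geq0}(x/y)^k k^n=\frac{y}{y-x}\omega_n\left(\frac{x}{y-x}\right)$. Multiplying by $y^n$ produces the middle sum on the left and $y^n\frac{y}{y-x}\omega_n\left(\frac{x}{y-x}\right)$ on the right. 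Finally, invoking the scaling relation with the argument $\frac{xy}{y-x}$, namely $\mathfrak{F}_n\left(\frac{xy}{y-x},y\right)=y^n\omega_n\left(\frac{x}{y-x}\right)$, I can rewrite this last quantity as $\frac{y}{y-x}\mathfrak{F}_n\left(\frac{xy}{y-x},y\right)$, which is the claimed left-hand side.

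The argument is largely bookkeeping, so I do not expect a serious obstacle; the only points requiring care are the justification of termwise differentiation of the geometric series inside its disk of convergence $|x|<|y|$ (the resulting identity being one of analytic functions that then extends by analytic continuation), and the correct algebraic simplification of $\frac{u}{1-u}$ into $\frac{x}{y-x}$ after the substitution $u=x/y$, which is precisely what makes the argument $\frac{xy}{y-x}$ of $\mathfrak{F}_n$ appear.
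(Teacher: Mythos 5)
Your proof is correct, but it takes a genuinely different route from the paper's. The paper proves the left equality directly at the level of generating functions: it computes
\begin{equation*}
\frac{y}{y-x}\sum_{n\geq 0}\mathfrak{F}_{n}\!\left(\tfrac{xy}{y-x},y\right)\frac{t^{n}}{n!}
=\frac{y}{y-x}\cdot\frac{1}{1-\frac{x}{y-x}(e^{ty}-1)}
=\frac{1}{1-\frac{x}{y}e^{ty}},
\end{equation*}
expands this as a geometric series in $e^{ty}$, interchanges summations, and compares coefficients of $t^{n}/n!$; the right equality it simply quotes from formula (4.1) of \cite{boyadzhiev2005series}. You do the mirror image: you prove the right equality from scratch by termwise application of the Euler operator to the geometric series $\sum_{k}(x/y)^{k}$ (each monomial being an eigenfunction with eigenvalue $k$), and you obtain the left equality by substituting $u=x/y$ into the known one-variable relation (\ref{operatorboyadzhiev}) and then invoking the scaling identity $\mathfrak{F}_{n}(x,y)=y^{n}\omega_{n}(x/y)$ with first argument $\tfrac{xy}{y-x}$, which correctly collapses to $y^{n}\omega_{n}\!\left(\tfrac{x}{y-x}\right)$. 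Your algebra checks out at every step ($\tfrac{1}{1-u}=\tfrac{y}{y-x}$, $\tfrac{u}{1-u}=\tfrac{x}{y-x}$), and your remark about termwise differentiation inside $|x|<|y|$ followed by analytic continuation is the right level of care. What your approach buys is a clean reduction of the two-variable statement to the one-variable one, making the homogeneity structure transparent; what the paper's approach buys is independence from (\ref{operatorboyadzhiev}) for the left equality, since it works directly with the defining generating function (\ref{genefunG(x,y)2var}). Either is acceptable; yours arguably better explains \emph{why} setting $y=1$ recovers (\ref{operatorboyadzhiev}), as noted in the paper's subsequent remark.
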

\begin{proof}
We have
\begin{align}
\frac{y}{y-x}
{\displaystyle\sum\limits_{n\geq 0}}
\mathfrak{F}_{n} \left( \frac{ xy}{y-x}, y\right)  \frac{t^{n}}{n!} \nonumber
&  =\frac{y}{y-x}\left(\frac{1}{1- \frac{x}{y-x}      ( e^{ty}-1)}\right)\nonumber\\
&  =\frac{1}{1- \frac{x}{y}       e^{ty}}\nonumber\\
&  ={\displaystyle\sum\limits_{k\geq 0}}
\left(\frac{x}{y} \right)^{k}
 \left(e^{ty}\right)^{k}\nonumber.
\end{align}

Then

\begin{align}
\frac{y}{y-x}
{\displaystyle\sum\limits_{n\geq 0}}
\mathfrak{F}_{n} \left( \frac{ xy}{y-x}, y\right)  \frac{t^{n}}{n!} \nonumber
&  ={\displaystyle\sum\limits_{k\geq 0}}
\left(\frac{x}{y} \right)^{k}
 {\displaystyle\sum\limits_{n\geq 0}} (ky)^{n}\frac{ t^{n}}{n!}
 \nonumber\\
&  ={\displaystyle\sum\limits_{n\geq 0}}
\left({\displaystyle\sum\limits_{k\geq 0}}
\left(\frac{x}{y} \right)^{k}
  (ky)^{n}\right)\frac{ t^{n}}{n!}.\nonumber
\end{align}

Equating the coefficients of $\frac{ t^{n}}{n!}$, we get
\begin{align}
\frac{y}{y-x}
\mathfrak{F}_{n} \left( \frac{ xy}{y-x}, y\right)  
&  ={\displaystyle\sum\limits_{k\geq 0}}
\left(\frac{x}{y} \right)^{k}
  (ky)^{n}.\nonumber
\end{align}

On the other hand, we apply the formula (4.1) in \cite{boyadzhiev2005series}, we get
\begin{align}
{\displaystyle\sum\limits_{k\geq0}}
\left(\frac{x}{y} \right)^{k} \left(   yk\right)^{n}
&=y^{n}{\displaystyle\sum\limits_{k\geq0}}\left(x \frac{d}{dx}\right)^{n} \left(\frac{x}{y} \right)^{k}\nonumber\\
&=y^{n}\left(x \frac{d}{dx}\right)^{n}{\displaystyle\sum\limits_{k\geq0}} \left(\frac{x}{y} \right)^{k}\nonumber\\
&=y^{n}\left(x \frac{d}{dx}\right)^{n} \frac{y}{y-x}.
\end{align}
This evidently completes the proof of the theorem.
\end{proof}
\begin{remark}
By setting $y = 1$ in \eqref{operatorgeneralized} we get \eqref{operatorboyadzhiev}.
\end{remark}
\indent Now,  recall that the exponential generating function for Bell polynomials $\phi_{n}(x)$, is given by
\begin{align}%
 e^{x(e^{t}-1)} &  =
{\displaystyle\sum\limits_{n\geq 0}}
\phi_{n}(x)  \frac{t^{n}}{n!}\label{genefunBell}
\end{align}
and given explicitly by
\begin{align}
\phi_{n}(x)    &  = {\displaystyle\sum\limits_{k=0}^{n}}
\genfrac{\{}{\}}{0pt}{}{n}{k}
x^{k}.\label{explicitBell}
\end{align}
In the following result, we will give the integral representation for $\mathfrak{F}_{n} \left(  x, y\right)$ and the link with $\phi_{n}(x)$.
\begin{theorem}
For $n\geq0$, we have
\begin{align}%
\mathfrak{F}_{n} \left(  x, y\right)
 &=y^{n}\int_{0}^{+\infty}\phi\left(\frac{x}{y}\lambda\right)e^{-\lambda}d\lambda \label{FubinigeneandBell}%
 \end{align}
and
\begin{align}%
{\displaystyle\sum\limits_{n\geq 0}}
\mathfrak{F}_{n} \left(  x, y\right)  \frac{t^{n}}{n!}
 &=\int_{0}^{+\infty}e^{-\lambda\left(1-\frac{x}{y}(e^{ty}-1)\right) }d\lambda.\label{Fubinigeneandintegral}%
\end{align}
\end{theorem}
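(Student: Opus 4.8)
The plan is to reduce both identities to the Gamma integral $\int_0^{+\infty}\lambda^{k}e^{-\lambda}\,d\lambda=k!$, which turns the factorials in the explicit formula \eqref{explicitFubini} into integrals. For \eqref{FubinigeneandBell} I would start from the explicit expansion \eqref{explicitBell} evaluated at $\tfrac{x}{y}\lambda$, namely $\phi_n\!\left(\tfrac{x}{y}\lambda\right)=\sum_{k=0}^{n}\genfrac{\{}{\}}{0pt}{}{n}{k}\left(\tfrac{x}{y}\right)^{k}\lambda^{k}$, multiply by $y^{n}e^{-\lambda}$, and integrate term by term over $[0,+\infty)$. Since the sum is finite the interchange is immediate, and each summand contributes $\int_0^{+\infty}\lambda^{k}e^{-\lambda}\,d\lambda=k!$, so the right-hand side collapses to $\sum_{k=0}^{n}\genfrac{\{}{\}}{0pt}{}{n}{k}k!\,x^{k}y^{n-k}$, which is exactly $\mathfrak{F}_n(x,y)$ by \eqref{explicitFubini}.

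For \eqref{Fubinigeneandintegral} there are two routes. The shortest is to evaluate the exponential integral directly: with $a=1-\tfrac{x}{y}(e^{ty}-1)$ one has $\int_0^{+\infty}e^{-\lambda a}\,d\lambda=1/a$, which is precisely the defining generating function \eqref{genefunG(x,y)2var}. The more instructive route, which ties the two identities together, is to multiply \eqref{FubinigeneandBell} by $t^{n}/n!$ and sum over $n$; interchanging the sum with the integral and applying the Bell generating function \eqref{genefunBell} with argument $\tfrac{x}{y}\lambda$ and variable $ty$ collapses $\sum_{n\geq0}\phi_n\!\left(\tfrac{x}{y}\lambda\right)\tfrac{(ty)^{n}}{n!}$ to $e^{\frac{x}{y}\lambda(e^{ty}-1)}$, leaving $\int_0^{+\infty}e^{-\lambda}\,e^{\frac{x}{y}\lambda(e^{ty}-1)}\,d\lambda$, which is the claimed right-hand side.

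The routine parts are the Gamma evaluation and the bookkeeping; the only step needing care is convergence, and I expect it to be the main obstacle. The integral $\int_0^{+\infty}e^{-\lambda a}\,d\lambda$ converges only for $\mathrm{Re}\,a>0$, so I would restrict $t,x,y$ to the region where $\tfrac{x}{y}(e^{ty}-1)<1$ --- equivalently, where the generating function \eqref{genefunG(x,y)2var} is analytic --- and there justify the interchange of the infinite sum and the integral by the absolute integrability of $e^{-\lambda}\,e^{\frac{x}{y}\lambda(e^{ty}-1)}$. Both displayed identities then hold on that region and extend elsewhere by the analytic continuation furnished by the rational generating function.
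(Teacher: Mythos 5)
Your proposal is correct and follows essentially the same route as the paper: the first identity is obtained by substituting $\tfrac{x}{y}\lambda$ into \eqref{explicitBell}, multiplying by $y^{n}e^{-\lambda}$, integrating via $\int_0^{+\infty}\lambda^{k}e^{-\lambda}\,d\lambda=k!$, and comparing with \eqref{explicitFubini}; the second is obtained by summing the first against $t^{n}/n!$ and invoking \eqref{genefunBell}, exactly as in the paper. Your additional remarks --- the direct evaluation $\int_0^{+\infty}e^{-\lambda a}\,d\lambda=1/a$ as a shortcut, and the convergence restriction $\tfrac{x}{y}(e^{ty}-1)<1$ justifying the interchange of sum and integral --- are sound and address a point the paper passes over in silence.
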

\begin{proof}
Replacing $x$ by $\frac{x}{y}\lambda$ in (\ref{explicitBell}) and multiplying both sides by $y^{n}e^{-\lambda}$ and integrating for $\lambda$ from zero to infinity, we have
\begin{align}
 y^{n}\int_{0}^{+\infty}\phi\left(\frac{x}{y}\lambda\right)e^{-\lambda}d\lambda
\nonumber
&   =y^{n}\int_{0}^{+\infty}\left(
 {\displaystyle\sum\limits_{k=0}^{n}}
\genfrac{\{}{\}}{0pt}{}{n}{k}
 \left(
 \frac{x}{y}\lambda\right)^{k} \right)e^{-\lambda}  d\lambda
\nonumber\\
&   = y^{n}
 {\displaystyle\sum\limits_{k=0}^{n}}
\genfrac{\{}{\}}{0pt}{}{n}{k}
 \left(
 \frac{x}{y}\right)^{k}
 \int_{0}^{+\infty}e^{-\lambda}\left(\lambda\right)^{k}  d\lambda
\nonumber\\
&   = y^{n}
 {\displaystyle\sum\limits_{k=0}^{n}}
\genfrac{\{}{\}}{0pt}{}{n}{k}
 \left(
 \frac{x}{y}\right)^{k} k!\nonumber.
\end{align}
By comparing with (\ref{explicitFubini}) we get (\ref{FubinigeneandBell}).

Now to prove (\ref{Fubinigeneandintegral}), using (\ref{FubinigeneandBell}), we have
\begin{align}
{\displaystyle\sum\limits_{n\geq 0}}
\mathfrak{F}_{n} \left(  x, y\right)  \frac{t^{n}}{n!}  &  ={\displaystyle\sum\limits_{n\geq 0}}
\left( y^{n}\int_{0}^{+\infty}\phi\left(\frac{x}{y}\lambda\right)e^{-\lambda}d\lambda \right)  \frac{t^{n}}{n!}
\nonumber\\
&   =\int_{0}^{+\infty}\left( e^{-\lambda} {\displaystyle\sum\limits_{n\geq 0}}
 \phi\left(\frac{x}{y}\lambda\right)\frac{(ty)^{n}}{n!} \right) d\lambda
\nonumber
\end{align}
we apply (\ref{genefunBell}), we get
\begin{align}
{\displaystyle\sum\limits_{n\geq 0}}
\mathfrak{F}_{n} \left(  x, y\right)  \frac{t^{n}}{n!}
&  =\int_{0}^{+\infty}\left( e^{-\lambda}
  e^{\frac{x}{y}\lambda(e^{ty}-1)} \right) d\lambda\nonumber\\
  &= \int_{0}^{+\infty}e^{-\lambda\left(1-\frac{x}{y}(e^{ty}-1)\right) }d\lambda.
\nonumber
\end{align}

\end{proof}
\begin{remark}By setting $y=1$ in  (\ref{FubinigeneandBell}) and (\ref{Fubinigeneandintegral}), respectively, we get (3.11) and (3.13) in \cite{boyadzhiev2005series}.
\end{remark}
The Fubini polynomials of two variables $\omega_{n}\left(  x, y\right)$ are defined in \cite{kargin2017some,kargin2018p,qi2019determinantal} by the following generating function
\begin{align}
 \frac{e^{ty}}{1-x(e^{t}-1)}&=
{\displaystyle\sum\limits_{n\geq 0}}
\omega_{n}\left(  x, y\right)  \frac{t^{n}}{n!}.\label{genefunfubini2var}
\end{align}
The next result represents the relation between $\mathfrak{F}_{n} \left(  x, y\right)$ and $\omega_{n}\left(  x, y\right)$.
\begin{theorem} \label{theorem8}
For $n\geq0$, we have
\begin{align}
\mathfrak{F}_{n} \left(  x, y\right)=
y^{n}
{\displaystyle\sum\limits_{k= 0}^{n}}y^{k}
\genfrac{(}{)}{0pt}{}{n}{k}%
(-1)^{k}
\omega_{n-k} \left(  \frac{x}{y}, y\right).
\end{align}
\end{theorem}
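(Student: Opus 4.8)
The plan is to prove the identity by the method of exponential generating functions, recognizing the right-hand side as a binomial convolution. I would write the right-hand side as $y^{n}S_{n}$, where
$$
S_{n}=\sum_{k=0}^{n}\binom{n}{k}(-y)^{k}\,\omega_{n-k}\!\left(\tfrac{x}{y},y\right).
$$
Since $S_{n}$ is the exponential convolution of the two sequences $k\mapsto(-y)^{k}$ and $m\mapsto\omega_{m}(x/y,y)$, its exponential generating function is the product of the two corresponding EGFs, and the whole identity reduces to a short generating-function computation.

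First I would record the two factors. The sequence $(-y)^{k}$ has EGF $\sum_{k\ge0}(-y)^{k}t^{k}/k!=e^{-yt}$. For the second factor I evaluate the two-variable Fubini generating function \eqref{genefunfubini2var} at the arguments $x/y$ and $y$, which gives $\sum_{m\ge0}\omega_{m}(x/y,y)\,t^{m}/m!=e^{ty}/\bigl(1-\tfrac{x}{y}(e^{t}-1)\bigr)$. Multiplying, the decisive cancellation $e^{-yt}e^{ty}=1$ collapses the numerator and yields
$$
\sum_{n\ge0}S_{n}\frac{t^{n}}{n!}=\frac{1}{1-\tfrac{x}{y}\bigl(e^{t}-1\bigr)}.
$$

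It then remains to account for the prefactor $y^{n}$. Because $\sum_{n\ge0}y^{n}S_{n}\,t^{n}/n!=\sum_{n\ge0}S_{n}(yt)^{n}/n!$, multiplying by $y^{n}$ amounts to the substitution $t\mapsto yt$ in the series just computed; this sends $e^{t}$ to $e^{yt}$ and produces $1/\bigl(1-\tfrac{x}{y}(e^{yt}-1)\bigr)$, which is precisely the defining generating function \eqref{genefunG(x,y)2var} of $\mathfrak{F}_{n}(x,y)$. Comparing coefficients of $t^{n}/n!$ then gives $y^{n}S_{n}=\mathfrak{F}_{n}(x,y)$, which is the claimed identity.

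The computation is essentially routine once it is set up, so the only real care needed is the bookkeeping of the two exponentials: the convolution with the weights $(-y)^{k}$ cancels the numerator $e^{ty}$ of the two-variable Fubini generating function, while the separate factor $y^{n}$ rescales the variable inside the surviving exponential $e^{t}\mapsto e^{yt}$. Conflating these two distinct effects is the easy trap. One should also be careful to feed \eqref{genefunfubini2var} the arguments $(x/y,y)$ rather than $(x,y)$, since it is $\omega_{n-k}(x/y,y)$, not $\omega_{n-k}(x,y)$, that appears on the right-hand side.
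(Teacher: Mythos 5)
Your proof is correct and follows essentially the same route as the paper: both identify the right-hand side as a binomial convolution whose exponential generating function is $e^{-ty}$ (resp.\ $e^{-ty^{2}}$ after rescaling) times the two-variable Fubini generating function \eqref{genefunfubini2var} evaluated at $(x/y,y)$, with the exponential factors cancelling to leave \eqref{genefunG(x,y)2var}. The only difference is organizational — you perform the convolution in the unscaled variable and then substitute $t\mapsto yt$ to absorb the prefactor $y^{n}$, whereas the paper works in the rescaled variable $ty$ throughout — so the two arguments are the same computation in a different order.
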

\begin{proof} From (\ref{genefunG(x,y)2var}) and (\ref{genefunfubini2var}), we have

\begin{align}
{\displaystyle\sum\limits_{n\geq 0}}
\mathfrak{F}_{n} \left(  x, y\right)  \frac{t^{n}}{n!}
&  =e^{-ty^{2}}\frac{e^{ty^{2}}}{1-\frac{x}{y}(e^{ty}-1)}
\nonumber\\
&  =\left(
{\displaystyle\sum\limits_{n\geq 0}}
  \frac{(-ty^{2})^{n}}{n!}\right)
  \left(
{\displaystyle\sum\limits_{n\geq 0}}
\omega_{n} \left(  \frac{x}{y}, y\right)  \frac{(ty)^{n}}{n!}\right)
\nonumber\\
&  = {\displaystyle\sum\limits_{n\geq 0}}
\left( {\displaystyle\sum\limits_{k= 0}^{n}}
\genfrac{(}{)}{0pt}{}{n}{k}
(-y^{2})^{k}
\omega_{n-k} \left(  \frac{x}{y}, y\right) y^{n-k} \right)\frac{t^{n}}{n!}
\nonumber\\
&  = {\displaystyle\sum\limits_{n\geq 0}}
\left( y^{n}
{\displaystyle\sum\limits_{k= 0}^{n}}
\genfrac{(}{)}{0pt}{}{n}{k}
(-1)^{k}y^{k}
\omega_{n-k} \left(  \frac{x}{y}, y\right)
\right)
 \frac{t^{n}}{n!}, \nonumber%
\end{align}
that is to say
\[
\mathfrak{F}_{n} \left(  x, y\right)=
y^{n}
{\displaystyle\sum\limits_{k= 0}^{n}}y^{k}
\genfrac{(}{)}{0pt}{}{n}{k}
(-1)^{k}
\omega_{n-k} \left(  \frac{x}{y}, y\right).
\]
\end{proof}
In addition to the above properties of $\mathfrak{F}_{n} \left(  x, y\right)$ polynomials, we now present some recurrence relations. The following lemma will be useful for the proof of the next theorem.

\begin{lemma} For nonzero complex numbers x and y, we have
\begin{align}
\frac{e^{ty}}{1-\frac{x}{y}\left(  e^{ty}-1\right)}   & = \left( \frac{1}{x}-\frac{1}{y}(e^{ty}-1)\right)\frac{d}{dt}\left(
\frac{1}{1-\frac{x}{y}\left(  e^{ty}-1\right)}\right)\label{lemm1rec1}.
\end{align}
\end{lemma}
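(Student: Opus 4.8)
The plan is to verify \eqref{lemm1rec1} by direct differentiation, viewing both sides as explicit functions of $t$ with $x$ and $y$ as fixed parameters. First I would abbreviate $u(t)=1-\frac{x}{y}\left(e^{ty}-1\right)$, so that the generating function in \eqref{genefunG(x,y)2var} reads $G(t)=1/u(t)$ and the left-hand side of \eqref{lemm1rec1} is simply $e^{ty}G(t)=e^{ty}/u(t)$. The whole task then reduces to showing that the right-hand side equals this same quantity.

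The one computation needed is the derivative of $G$. Since $u'(t)=-\frac{x}{y}\cdot y e^{ty}=-x e^{ty}$, the quotient rule gives
\[
\frac{d}{dt}\left(\frac{1}{1-\frac{x}{y}\left(e^{ty}-1\right)}\right)=-\frac{u'(t)}{u(t)^{2}}=\frac{x e^{ty}}{u(t)^{2}}.
\]
This is entirely routine.

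The step that makes the identity collapse is to recognize the bracketed prefactor as $u(t)/x$. Indeed, the definition of $u$ gives $\frac{x}{y}\left(e^{ty}-1\right)=1-u(t)$, hence $\frac{1}{y}\left(e^{ty}-1\right)=\frac{1-u(t)}{x}$, so that
\[
\frac{1}{x}-\frac{1}{y}\left(e^{ty}-1\right)=\frac{1}{x}-\frac{1-u(t)}{x}=\frac{u(t)}{x}.
\]
Multiplying this by the derivative above yields $\frac{u(t)}{x}\cdot\frac{x e^{ty}}{u(t)^{2}}=\frac{e^{ty}}{u(t)}$, which is exactly the left-hand side, completing the verification.

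I do not expect a genuine obstacle here: the statement is an algebraic rearrangement of the generating function \eqref{genefunG(x,y)2var} together with its derivative. The only point requiring a moment's attention is spotting that the prefactor $\frac{1}{x}-\frac{1}{y}\left(e^{ty}-1\right)$ is precisely $u(t)/x$, since this is what cancels the square in the denominator of $G'(t)$ and leaves a single power of $u(t)$, reproducing the factor $e^{ty}/u(t)$.
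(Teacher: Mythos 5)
Your verification is correct, and the computation is exactly the routine one: $u'(t)=-xe^{ty}$ gives $G'(t)=xe^{ty}/u^2$, the bracket equals $u/x$, and the product collapses to $e^{ty}/u$. The paper states this lemma without any proof at all, so your argument simply supplies the (standard, direct) verification the authors omitted; there is no alternative approach in the paper to compare against.
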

\begin{theorem}
For $n\geq0$, we have
\begin{equation}\label{recurrencefubinigeneralized1}
\mathfrak{F}_{n+1} \left(  x, y\right)= \left(  \frac{xy}{x+y}\right)
{\displaystyle\sum\limits_{k=0}^{n}}\genfrac{(}{)}{0pt}{}{n}{k}y^{n-k}\left(
\mathfrak{F}_{k} \left(  x, y\right) +\frac{1}{y}
\mathfrak{F}_{k+1} \left(  x, y\right) \right).
\end{equation}
\end{theorem}
\begin{proof}
Using the above lemma, then (\ref{lemm1rec1}) is equivalent to
\begin{align*}
{\displaystyle\sum\limits_{n\geq 0}}\left(
{\displaystyle\sum\limits_{k=0}^{n}}\genfrac{(}{)}{0pt}{}{n}{k}y^{n-k}
\mathfrak{F}_{k} \left(  x, y\right) \right)\frac{t^{n}}{n!}   & = \left( \frac{1}{x}-\frac{1}{y}{\displaystyle\sum\limits_{n\geq 0}} \frac{(ty)^{n}}{n!}+\frac{1}{y}\right){\displaystyle\sum\limits_{n\geq 0}}
\mathfrak{F}_{n+1} \left(  x, y\right)  \frac{t^{n}}{n!}.
\end{align*}
Then,
\begin{align*}
{\displaystyle\sum\limits_{n\geq 0}}\left(
{\displaystyle\sum\limits_{k=0}^{n}}\genfrac{(}{)}{0pt}{}{n}{k}y^{n-k}
\mathfrak{F}_{k} \left(  x, y\right) \right)\frac{t^{n}}{n!}
& =
 \frac{1}{x}{\displaystyle\sum\limits_{n\geq 0}}
\mathfrak{F}_{n+1} \left(  x, y\right)  \frac{t^{n}}{n!}-\frac{1}{y}
{\displaystyle\sum\limits_{n\geq 0}} \frac{(ty)^{n}}{n!}
{\displaystyle\sum\limits_{n\geq 0}}
\mathfrak{F}_{n+1} \left(  x, y\right)  \frac{t^{n}}{n!}
\\
&+\frac{1}{y}
{\displaystyle\sum\limits_{n\geq 0}}
\mathfrak{F}_{n+1} \left(  x, y\right)  \frac{t^{n}}{n!}\\
& =
 \frac{1}{x}{\displaystyle\sum\limits_{n\geq 0}}
\mathfrak{F}_{n+1} \left(  x, y\right)  \frac{t^{n}}{n!}-\frac{1}{y}
{\displaystyle\sum\limits_{n\geq 0}}\left(
{\displaystyle\sum\limits_{k=0}^{n}}\genfrac{(}{)}{0pt}{}{n}{k}y^{n-k}
\mathfrak{F}_{k+1} \left(  x, y\right)  \right)\frac{t^{n}}{n!}\\
&+\frac{1}{y}
{\displaystyle\sum\limits_{n\geq 0}}
\mathfrak{F}_{n+1} \left(  x, y\right)  \frac{t^{n}}{n!}\\
& = {\displaystyle\sum\limits_{n\geq 0}}\left(
 \frac{1}{x}
\mathfrak{F}_{n+1} \left(  x, y\right) -\frac{1}{y}
{\displaystyle\sum\limits_{k=0}^{n}}\genfrac{(}{)}{0pt}{}{n}{k}y^{n-k}
\mathfrak{F}_{k+1} \left(  x, y\right)
+\frac{1}{y}
\mathfrak{F}_{n+1} \left(  x, y\right)
\right)\frac{t^{n}}{n!}.
\end{align*}
Equating the coefficients of $\frac{t^{n}}{n!}$, we get
\begin{align*}
{\displaystyle\sum\limits_{k=0}^{n}}\genfrac{(}{)}{0pt}{}{n}{k}y^{n-k}
\mathfrak{F}_{k} \left(  x, y\right)  &=
 \frac{1}{x}
\mathfrak{F}_{n+1} \left(  x, y\right) -\frac{1}{y}
{\displaystyle\sum\limits_{k=0}^{n}}\genfrac{(}{)}{0pt}{}{n}{k}y^{n-k}
\mathfrak{F}_{k+1} \left(  x, y\right)
+\frac{1}{y}
\mathfrak{F}_{n+1} \left(  x, y\right)
\end{align*}
and after some rearrangements, we obtain the result.
\end{proof}
\begin{remark}
 As a special case, we get the formula (24) in \cite{dil2011investigating} by setting $y = 1$ in (\ref{recurrencefubinigeneralized1}).
\end{remark}
\begin{theorem}
For $n\geq0$, we have
\begin{equation}\label{recurrencefubinigeneralized}
\mathfrak{F}_{n+1} \left(  x, y\right)+y\mathfrak{F}_{n} \left(  x, y\right)=(x+y)
{\displaystyle\sum\limits_{k=0}^{n}}\genfrac{(}{)}{0pt}{}{n}{k}
\mathfrak{F}_{k} \left(  x, y\right)
\mathfrak{F}_{n-k} \left(  x, y\right).\end{equation}
\end{theorem}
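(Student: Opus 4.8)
The plan is to translate the claimed identity into a first-order differential equation for the generating function and then extract coefficients. Write $F(t)=\sum_{n\ge 0}\mathfrak{F}_n(x,y)\,t^n/n!$, which by \eqref{genefunG(x,y)2var} equals $\bigl(1-\tfrac{x}{y}(e^{ty}-1)\bigr)^{-1}$. First I would observe that each of the three sums appearing in the asserted recurrence is a familiar operation on $F$: the sequence $\mathfrak{F}_{n+1}$ has exponential generating function $F'(t)$; the sequence $y\,\mathfrak{F}_n$ has generating function $yF(t)$; and the binomial convolution $\sum_{k=0}^n\binom{n}{k}\mathfrak{F}_k\mathfrak{F}_{n-k}$ has generating function $F(t)^2$. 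Hence the theorem is equivalent to the single ordinary differential equation $F'(t)+yF(t)=(x+y)F(t)^2$, and matching coefficients of $t^n/n!$ at the end recovers the claimed identity.

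The core of the argument is to verify this differential equation directly from the closed form of $F$. Differentiating gives $F'(t)=x\,e^{ty}F(t)^2$, since $\frac{d}{dt}\bigl(\tfrac{x}{y}(e^{ty}-1)\bigr)=x\,e^{ty}$ and $\frac{d}{dt}(1-u)^{-1}=(1-u)^{-2}u'$. The key algebraic step is to eliminate the exponential in favour of $F$ itself: from $1/F=1-\tfrac{x}{y}(e^{ty}-1)$ one solves $x\,e^{ty}=(x+y)-y/F$. Substituting this expression into $F'=x\,e^{ty}F^2$ yields $F'=(x+y)F^2-yF$, which is precisely the desired equation.

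I do not anticipate a genuine obstacle here; the computation is a short verification. The only point requiring care is the elimination step that rewrites $x\,e^{ty}$ as $(x+y)-y/F$, together with the bookkeeping that matches the derivative $F'$ and the square $F^2$ against the correctly shifted and convolved coefficient sequences. Once the relation $F'+yF=(x+y)F^2$ is in hand, reading off the coefficient of $t^n/n!$ on both sides is immediate and completes the proof.
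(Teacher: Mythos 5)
Your argument is correct and is essentially the paper's own proof: both differentiate the generating function to get $F'=x e^{ty}F^2$ and then eliminate $xe^{ty}$ via $xe^{ty}=(x+y)-y/F$ (the paper writes this as $\frac{xe^{ty}}{1-\frac{x}{y}(e^{ty}-1)}=\frac{x+y}{1-\frac{x}{y}(e^{ty}-1)}-y$), arriving at $F'=(x+y)F^2-yF$ and comparing coefficients of $t^n/n!$. No substantive difference.
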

\begin{proof}
Considering the derivative of the generating function of the
polynomials $\mathfrak{F}_{n} \left(  x, y\right)$ (\ref{genefunG(x,y)2var}), we have
\begin{align}
{\displaystyle\sum\limits_{n\geq 0}}
\mathfrak{F}_{n+1} \left(  x, y\right)  \frac{t^{n}}{n!}  &  =\frac{xe^{ty}}{\left(
1-\frac{x}{y}(e^{ty}-1)\right)^{2}
}\nonumber\\
&  =\left(\frac{x+y}{
1-\frac{x}{y}(e^{ty}-1)}
-y\right)\frac{1}{
1-\frac{x}{y}(e^{ty}-1)} \nonumber \\%
&=(x+y){\displaystyle\sum\limits_{n\geq 0}}
\mathfrak{F}_{n} \left(  x, y\right)  \frac{t^{n}}{n!}{\displaystyle\sum\limits_{n\geq 0}}
\mathfrak{F}_{n} \left(  x, y\right)  \frac{t^{n}}{n!}-y{\displaystyle\sum\limits_{n\geq 0}}
\mathfrak{F}_{n} \left(  x, y\right)  \frac{t^{n}}{n!}
\nonumber\\
&=(x+y)
{\displaystyle\sum\limits_{n\geq 0}}
\left( {\displaystyle\sum\limits_{k= 0}^{n}}\genfrac{(}{)}{0pt}{}{n}{k}
\mathfrak{F}_{k} \left(  x, y\right)
\mathfrak{F}_{n-k} \left(  x, y\right)  -y
\mathfrak{F}_{n} \left(  x, y\right)  \right)  \frac{t^{n}}{n!}.\nonumber
\end{align}
Equating the coefficients of $\frac{t^{n}}{n!}$,
and after some rearrangements, we obtain the result.
\end{proof}
For $y=1$, we get the result of the Theorem 1  in \cite{kargin2017some}.
\begin{theorem}
For $n\geq0$ and  for $x_{1}$ different to $ x_{2}$, we have
\begin{equation}\label{2recurrencefubinigeneralized}
{\displaystyle\sum\limits_{k=0}^{n}}\genfrac{(}{)}{0pt}{}{n}{k}
\mathfrak{F}_{k} \left(  x_{1}, y\right)
\mathfrak{F}_{n-k} \left(  x_{2}, y\right)
=\frac{x_{2}\mathfrak{F}_{n} \left(  x_{2}, y\right)-x_{1}\mathfrak{F}_{n} \left(  x_{1}, y\right)}{x_{2}-x_{1}}.
\end{equation}
\end{theorem}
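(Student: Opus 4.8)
The plan is to exploit the fact that the left-hand side is a binomial (Cauchy) convolution, so it corresponds to a product of the exponential generating functions given in \eqref{genefunG(x,y)2var}. Setting $u=\frac{1}{y}(e^{ty}-1)$, the generating function for $\mathfrak{F}_{n}(x_{i},y)$ is simply $\frac{1}{1-x_{i}u}$, and the convolution on the left translates into the product
\[
\left(\sum_{n\geq0}\mathfrak{F}_{n}(x_{1},y)\frac{t^{n}}{n!}\right)\left(\sum_{n\geq0}\mathfrak{F}_{n}(x_{2},y)\frac{t^{n}}{n!}\right)=\frac{1}{(1-x_{1}u)(1-x_{2}u)}.
\]
So the whole statement reduces to recognizing the right-hand side of \eqref{2recurrencefubinigeneralized} as the coefficient sequence of this product.

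The key algebraic step is a partial fraction decomposition in the variable $u$. First I would write
\[
\frac{1}{(1-x_{1}u)(1-x_{2}u)}=\frac{1}{x_{2}-x_{1}}\left(\frac{x_{2}}{1-x_{2}u}-\frac{x_{1}}{1-x_{1}u}\right),
\]
which is valid precisely because $x_{1}\neq x_{2}$ (this hypothesis is exactly what makes the denominator $x_{2}-x_{1}$ nonzero). This is the only place where the distinctness assumption is used, and it is the natural explanation for why it appears in the statement. Each term $\frac{1}{1-x_{i}u}$ is again the generating function of $(\mathfrak{F}_{n}(x_{i},y))_{n\geq0}$ by \eqref{genefunG(x,y)2var}, so the right-hand side above expands as
\[
\frac{1}{x_{2}-x_{1}}\sum_{n\geq0}\bigl(x_{2}\mathfrak{F}_{n}(x_{2},y)-x_{1}\mathfrak{F}_{n}(x_{1},y)\bigr)\frac{t^{n}}{n!}.
\]

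Finally I would equate the two expressions for the product generating function and read off the coefficient of $\frac{t^{n}}{n!}$. On the left the Cauchy product contributes $\sum_{k=0}^{n}\binom{n}{k}\mathfrak{F}_{k}(x_{1},y)\mathfrak{F}_{n-k}(x_{2},y)$, and on the right it contributes $\frac{x_{2}\mathfrak{F}_{n}(x_{2},y)-x_{1}\mathfrak{F}_{n}(x_{1},y)}{x_{2}-x_{1}}$, yielding \eqref{2recurrencefubinigeneralized} at once. There is no genuine obstacle here; the decomposition is elementary, and the only subtlety worth flagging is that the substitution $u=\frac{1}{y}(e^{ty}-1)$ is used purely as a bookkeeping device, so that both sides are treated as formal power series in $t$ and the coefficient comparison is legitimate term by term.
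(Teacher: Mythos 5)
Your proof is correct and follows essentially the same route as the paper: the paper's proof consists of exactly the partial fraction decomposition $\frac{1}{(1-\frac{x_1}{y}(e^{ty}-1))(1-\frac{x_2}{y}(e^{ty}-1))}=\frac{x_2}{x_2-x_1}\frac{1}{1-\frac{x_2}{y}(e^{ty}-1)}-\frac{x_1}{x_2-x_1}\frac{1}{1-\frac{x_1}{y}(e^{ty}-1)}$, with the Cauchy product and coefficient extraction left implicit. You have merely written out in full the steps the authors leave to the reader.
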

\begin{proof} The proof of (\ref{2recurrencefubinigeneralized}) becomes as follows
\begin{align}
\frac{1}{
1-\frac{x_{1}}{y}(e^{ty}-1)
} \frac{1}{
1-\frac{x_{2}}{y}(e^{ty}-1)
}&  =\frac{x_{2}}{x_{2}-x_{1}}\frac{1}{
1-\frac{x_{2}}{y}(e^{ty}-1)
} \nonumber\\
 &- \frac{x_{1}}{x_{2}-x_{1}}\frac{1}{
1-\frac{x_{1}}{y}(e^{ty}-1)
}\nonumber
.\end{align}
\end{proof}
Now, in this part of the paper, we will connect the polynomials $\mathfrak{F}_{n} \left(  x, y\right)$ with Eulerian polynomials and Frobenius-Euler polynomials.
It is known that for $x\neq1$ and $n \geq 0$, the Eulerian polynomials  $A_{ n}(x)$ and the Frobenius-Euler polynomials $H_{n}(x;y)$  are defined
respectively by the following generating functions \cite{SrivastavaBouticheRahmani, tomaz2014matrix}
\begin{equation}%
\frac{1-x}{
e^{t(x-1)}-x} =
{\displaystyle\sum\limits_{n\geq 0}}
A_{n} \left(  x \right)  \frac{t^{n}}{n!},
\label{genefunEulerianpolynomials1}
\end{equation}

\begin{equation}
\frac{1-x}{
e^{
t}-x}e^{
ty}  =
{\displaystyle\sum\limits_{n\geq 0}}
H_{n} \left(  x;y \right)  \frac{t^{n}}{n!}.\label{genefunFrobeniusEulerianpolynomials}
\end{equation}

\begin{theorem}
For $n\geq 0$,  and for nonzero complex numbers $x$ and $y$,we have
\begin{align}
\mathfrak{F}_{n} \left(  x, y\right)
 & = x^{n}
A_{n} \left(  1+\frac{y}{x}\right)\label{(4.23)}
\end{align}
and for $t\neq 1$, we have
\begin{align}
A_{n} \left(  t\right)
 & = \left(  \frac{t-1}{y}\right)^{n}
\mathfrak{F}_{n} \left(  \frac{y}{t-1}, y\right)
  = \left(  \frac{1}{x}\right)^{n}
\mathfrak{F}_{n} \left(
x, x(t-1)
\right).\label{(4.25)}
\end{align}
\end{theorem}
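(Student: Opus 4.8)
The plan is to establish all three identities purely by comparison of the generating functions already rewritten just before the statement, so that no new machinery is required.

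First I would start from the Eulerian form (\ref{genefunEulerianpolynomials}), namely
\[
\frac{1}{e^{t}-x}=-\sum_{n\geq0}\frac{A_{n}(x)}{(x-1)^{n+1}}\frac{t^{n}}{n!},
\]
and carry out the simultaneous substitution $x\mapsto 1+\tfrac{y}{x}$ and $t\mapsto ty$. The left-hand side becomes $\dfrac{1}{e^{ty}-(1+y/x)}$, which is exactly the right-hand side of the rewritten Fubini relation $\tfrac{-x}{y}\sum_{n}\mathfrak{F}_{n}(x,y)\tfrac{t^{n}}{n!}$ displayed above the theorem. On the right-hand side the factor $(x-1)^{n+1}$ turns into $(y/x)^{n+1}$ and $t^{n}$ into $(ty)^{n}$; gathering the powers of $x$ and $y$ collapses the series to $-\tfrac{x}{y}\sum_{n}x^{n}A_{n}(1+y/x)\tfrac{t^{n}}{n!}$.

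Next I would cancel the common prefactor $-x/y$ from both expansions and equate coefficients of $t^{n}/n!$, which yields the first identity $\mathfrak{F}_{n}(x,y)=x^{n}A_{n}\!\left(1+\tfrac{y}{x}\right)$. The two inversion formulas are then purely algebraic consequences. Setting $t=1+\tfrac{y}{x}$, so that $x=y/(t-1)$ whenever $t\neq1$, and solving the first identity for $A_{n}(t)$ gives $A_{n}(t)=\bigl(\tfrac{t-1}{y}\bigr)^{n}\mathfrak{F}_{n}\!\left(\tfrac{y}{t-1},y\right)$. For the last formula I would instead substitute $y\mapsto x(t-1)$ directly into the first identity; the argument $1+y/x$ then collapses to $t$, and the prefactor $x^{n}$ produces $A_{n}(t)=x^{-n}\mathfrak{F}_{n}(x,x(t-1))$.

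I expect the only delicate point to be the bookkeeping of the $x$- and $y$-powers under the double substitution, in particular matching the $(y/x)^{n+1}$ coming from the denominator against the $(ty)^{n}$ coming from the numerator so that exactly one net factor of $x/y$ survives to cancel against the Fubini prefactor. The domain restrictions cause no trouble: the Eulerian expansion requires its variable to differ from $1$, which here means $1+y/x\neq1$, i.e.\ $y\neq0$, already guaranteed since $y$ is a nonzero parameter; and the condition $t\neq1$ in the inverse formulas is exactly what makes the substitution $x=y/(t-1)$ legitimate.
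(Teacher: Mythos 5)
Your proposal is correct and follows essentially the same route as the paper: the authors likewise rewrite the two generating functions as $\frac{-x}{y}\sum_{n}\mathfrak{F}_{n}(x,y)\frac{t^{n}}{n!}=\frac{1}{e^{ty}-(1+y/x)}$ and $-\sum_{n}\frac{A_{n}(x)}{(x-1)^{n+1}}\frac{t^{n}}{n!}=\frac{1}{e^{t}-x}$, perform the substitution $x\mapsto1+\frac{y}{x}$, $t\mapsto ty$, and equate coefficients to obtain $\frac{-x}{y}\mathfrak{F}_{n}(x,y)=-y^{n}A_{n}(1+\frac{y}{x})/(\frac{y}{x})^{n+1}$, which simplifies to the first identity. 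Your derivation of the two inverse formulas by the substitutions $x=y/(t-1)$ and $y=x(t-1)$ matches what the paper leaves implicit, and your power-of-$x$ and $y$ bookkeeping checks out.
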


\begin{proof}
The generating functions (\ref{genefunG(x,y)2var}) and (\ref{genefunEulerianpolynomials1}) can be rewritten as
\begin{align}
\frac{1}{e^{ty}-(1+\frac{y}{x})} &  =
-\frac{x}{y}
{\displaystyle\sum\limits_{n\geq 0}}
\mathfrak{F}_{n} \left(  x, y\right)  \frac{t^{n}}{n!}
\end{align}
and for $x\neq1$
\begin{align}
\frac{1}{
e^{t}-x} &  =
-{\displaystyle\sum\limits_{n\geq 0}}
\frac{A_{n} \left(  x\right)}{(x-1)^{n+1}}  \frac{t^{n}}{n!}
.\label{genefunEulerianpolynomials}
\end{align}
Then,
\begin{align}
\frac{x}{y}\mathfrak{F}_{n} \left(  x, y\right)
 & = y^{n}
\frac{A_{n} \left(  1+\frac{y}{x}\right)}{(\frac{y}{x})^{n+1}}\nonumber.\\
 & = x^{n}
\frac{A_{n} \left(  1+\frac{y}{x}\right)}{(\frac{y}{x})}\nonumber.
\end{align}
Which is equivalent to (\ref{(4.23)}).\\

Now, for $t=1+\frac{y}{x}$ in (\ref{(4.23)}), we obtain (\ref{(4.25)}).

\end{proof}

\begin{theorem}
For $n\geq 0$, we have
\[
\mathfrak{F}_{n} \left(  x, y\right)=
y^{n}
{\displaystyle\sum\limits_{k= 0}^{n}}y^{k}
\genfrac{(}{)}{0pt}{}{n}{k}
(-1)^{k}
H_{n-k} \left( 1+ \frac{y}{x}; y\right).
\]
\end{theorem}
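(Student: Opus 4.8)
The plan is to mirror the generating-function argument used in the proof of Theorem~\ref{theorem8}, replacing the two-variable Fubini polynomials $\omega_{n}$ by the Frobenius--Euler polynomials $H_{n}$. First I would record the generating function of the shifted, rescaled Frobenius--Euler sequence: substituting the first argument $1+\tfrac{y}{x}$ and the second argument $y$ into \eqref{genefunFrobeniusEulerianpolynomials} and then evaluating the series at $ty$ rather than at $t$ gives
\[
{\displaystyle\sum\limits_{m\geq 0}} H_{m}\left(1+\tfrac{y}{x},y\right)\frac{(ty)^{m}}{m!}
= \frac{1-\left(1+\frac{y}{x}\right)}{e^{ty}-\left(1+\frac{y}{x}\right)}\,e^{ty^{2}}
= \frac{-\frac{y}{x}}{e^{ty}-1-\frac{y}{x}}\,e^{ty^{2}},
\]
where the factor $e^{ty^{2}}$ arises from the exponential $e^{t'y}$ in \eqref{genefunFrobeniusEulerianpolynomials} after the substitution $t'=ty$.

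The key algebraic step is to identify the rational factor on the right with the generating function \eqref{genefunG(x,y)2var} of $\mathfrak{F}_{n}(x,y)$. Multiplying the numerator and denominator of $\frac{1}{1-\frac{x}{y}(e^{ty}-1)}$ by $-\frac{y}{x}$ turns the denominator into $e^{ty}-1-\frac{y}{x}$, so that
\[
\frac{1}{1-\frac{x}{y}(e^{ty}-1)} = \frac{-\frac{y}{x}}{e^{ty}-1-\frac{y}{x}}.
\]
Combining the two displays yields the clean factorization
\[
{\displaystyle\sum\limits_{n\geq 0}} \mathfrak{F}_{n}(x,y)\frac{t^{n}}{n!}
= e^{-ty^{2}}{\displaystyle\sum\limits_{m\geq 0}} H_{m}\left(1+\tfrac{y}{x},y\right)\frac{(ty)^{m}}{m!},
\]
which is the exact analogue of the relation $\sum_{n}\mathfrak{F}_{n}\frac{t^{n}}{n!}=e^{-ty^{2}}\sum_{n}\omega_{n}(\tfrac{x}{y},y)\frac{(ty)^{n}}{n!}$ exploited in Theorem~\ref{theorem8}.

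To finish, I would expand $e^{-ty^{2}}=\sum_{k\geq 0}(-y^{2})^{k}\frac{t^{k}}{k!}$ and form the Cauchy product of the two exponential generating functions. The coefficient of $\frac{t^{n}}{n!}$ on the right is the binomial convolution $\sum_{k=0}^{n}\binom{n}{k}(-y^{2})^{k}\,y^{n-k}H_{n-k}(1+\tfrac{y}{x},y)$, and collecting the powers of $y$ via $y^{2k}\cdot y^{n-k}=y^{n+k}$ rewrites this as $y^{n}\sum_{k=0}^{n}\binom{n}{k}(-1)^{k}y^{k}H_{n-k}(1+\tfrac{y}{x},y)$. Equating coefficients with the left-hand side then gives the claimed identity.

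I expect the only delicate point to be the bookkeeping in the key algebraic step: one must correctly track that the substitution into \eqref{genefunFrobeniusEulerianpolynomials} produces the factor $e^{ty^{2}}$ and that the denominators $e^{ty}-1-\frac{y}{x}$ genuinely coincide after the normalization by $-\frac{y}{x}$. Everything else is the routine extraction of coefficients from a product of exponential generating functions, carried out exactly as in the proof of Theorem~\ref{theorem8}.
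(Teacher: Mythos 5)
Your proposal is correct and follows essentially the same route as the paper: the paper likewise writes $\sum_{n\geq 0}\mathfrak{F}_{n}(x,y)\frac{t^{n}}{n!}=e^{-ty^{2}}\cdot\frac{1-(1+\frac{y}{x})}{e^{ty}-(1+\frac{y}{x})}\,e^{ty^{2}}$, identifies the second factor as $\sum_{n\geq 0}H_{n}(1+\frac{y}{x},y)\frac{(ty)^{n}}{n!}$, and then extracts coefficients from the Cauchy product exactly as in Theorem~\ref{theorem8}. Your write-up is in fact slightly more careful than the paper's (which contains a typo, printing $1+\frac{x}{y}$ in the final display), and your bookkeeping of the factor $e^{ty^{2}}$ and the normalization by $-\frac{y}{x}$ checks out.
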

\begin{proof} From the generating functions (\ref{genefunG(x,y)2var}) and (\ref{genefunFrobeniusEulerianpolynomials}), we have,
\begin{align}
{\displaystyle\sum\limits_{n\geq 0}}
\mathfrak{F}_{n} \left(  x, y\right)  \frac{t^{n}}{n!}  &  =e^{-ty^{2}}\frac{\left(1-(1+\frac{y}{x})\right)}{e^{ty}-(1+\frac{y}{x})}e^{ty^{2}}
\nonumber\\
&  =e^{-ty^{2}}{\displaystyle\sum\limits_{n\geq 0}}
H_{n} \left(1+  \frac{x}{y}; y\right)  \frac{(ty)^{n}}{n!}.
\nonumber
\end{align}
In the same way as the proof of Theorem \ref{theorem8}. we get the result.
\end{proof}

\section{PROBABILISTIC REPRESENTATION}
We consider a geometric distributed random variable $X$. The probability density function, for $k\in \mathds{N^{*}}$ and two parameters $p$ and $q$, such that $q=1-p$, as follows:
\[P(X=k)=pq^{k-1}.\]
The higher moment of $X$ is given by
\begin{align} \label{highermomonts}
E(X^{n})=&{\displaystyle\sum\limits_{k\geq 1}}
k^{n}p(1-p)^{k-1}.
\end{align}
In the next paragraph, we show that $\mathfrak{F}_{n} \left(  x, y\right)$  can be viewed as the $n$th moment of a random
variable $X-1$ where $X$ follows the geometric law.
\begin{theorem} Let $X$ be a random variable follows the geometric law and
for $p=\frac{y}{x+y}>0$ , we have
\begin{align}
\mathfrak{F}_{n} \left(  x, y\right)
=&\frac{y}{x+y}{\displaystyle\sum\limits_{k\geq0}}
\left(\frac{x}{x+y} \right)^{k} \left(   yk\right)^{n}\\
=&y^{n}E((X-1)^{n}).
\end{align}
\end{theorem}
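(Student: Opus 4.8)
The plan is to prove the two displayed equalities separately. The first is an immediate specialization of the two–variable geometric–series identity \eqref{operatorgeneralized} proved above, and the second is a direct evaluation of the moments of the shifted geometric variable $X-1$. Neither step requires any new machinery, so the work is essentially algebraic bookkeeping together with one convergence caveat.

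First I would derive the first equality from \eqref{operatorgeneralized}. Renaming the first argument there to a free parameter $u$, that identity reads
\[
\frac{y}{y-u}\,\mathfrak{F}_{n}\!\left(\frac{uy}{y-u},y\right)=\sum\limits_{k\geq0}\left(\frac{u}{y}\right)^{k}(yk)^{n}.
\]
To force the geometric ratio on the right to equal $\frac{x}{x+y}$, I would set $u=\frac{xy}{x+y}$, so that $\frac{u}{y}=\frac{x}{x+y}$. A short computation then gives $y-u=\frac{y^{2}}{x+y}$, whence $\frac{y}{y-u}=\frac{x+y}{y}$ and $\frac{uy}{y-u}=x$. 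Substituting these three values collapses the left-hand side to $\frac{x+y}{y}\,\mathfrak{F}_{n}(x,y)$, and rearranging yields exactly
\[
\mathfrak{F}_{n}(x,y)=\frac{y}{x+y}\sum\limits_{k\geq0}\left(\frac{x}{x+y}\right)^{k}(yk)^{n}.
\]

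For the second equality I would compute $E((X-1)^{n})$ directly from the geometric law $P(X=k)=pq^{k-1}$ $(k\geq1)$. With $p=\frac{y}{x+y}$ one has $q=1-p=\frac{x}{x+y}$, and the index shift $j=k-1$ gives
\[
E((X-1)^{n})=\sum\limits_{k\geq1}(k-1)^{n}pq^{k-1}=p\sum\limits_{j\geq0}j^{n}q^{j}=\frac{y}{x+y}\sum\limits_{j\geq0}j^{n}\left(\frac{x}{x+y}\right)^{j}.
\]
Multiplying by $y^{n}$ and using $y^{n}j^{n}=(yj)^{n}$ reproduces the series in the first equality, so $y^{n}E((X-1)^{n})=\mathfrak{F}_{n}(x,y)$, which finishes the argument.

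The computations themselves are routine; the only point that needs care is the convergence of $\sum_{k}\left(\frac{x}{x+y}\right)^{k}(yk)^{n}$, which is precisely what forces $X$ to be a genuine geometric variable, i.e. $0\le q=\frac{x}{x+y}<1$ (equivalently $0<p=\frac{y}{x+y}\le1$). Under this restriction both manipulations are legitimate, and the identity exhibits $y^{-n}\mathfrak{F}_{n}(x,y)$ as the $n$th moment of the nonnegative integer-valued variable $X-1$.
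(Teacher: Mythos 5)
Your proof is correct, but it reaches the first equality by a different route than the paper. The paper's proof works at the level of generating functions: it rewrites \eqref{genefunG(x,y)2var} as $\frac{y}{x+y}\bigl(1-\frac{x}{x+y}e^{ty}\bigr)^{-1}$, expands the geometric series in $e^{ty}$, interchanges the sums, and equates coefficients of $t^{n}/n!$, then concludes the probabilistic statement by ``comparing with \eqref{highermomonts}.'' You instead observe that the desired series identity is the specialization $u=\frac{xy}{x+y}$ of the already-proved identity \eqref{operatorgeneralized} (your computations $\frac{u}{y}=\frac{x}{x+y}$, $\frac{y}{y-u}=\frac{x+y}{y}$, $\frac{uy}{y-u}=x$ all check out), which avoids redoing the generating-function expansion entirely. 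Your treatment of the second equality is also more careful than the paper's: you spell out the index shift $E((X-1)^{n})=p\sum_{j\ge0}j^{n}q^{j}$ that the paper leaves implicit in its ``comparison,'' and you correctly flag the convergence condition $0\le q=\frac{x}{x+y}<1$ needed both for the series and for $X$ to be a genuine geometric variable, a point the paper passes over by stating only $p>0$. Both arguments are sound; yours is shorter and makes the logical dependence on the earlier theorem explicit, while the paper's is self-contained.
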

\begin{proof}
From (\ref{genefunG(x,y)2var}), we have
\begin{align}
{\displaystyle\sum\limits_{n\geq 0}}
\mathfrak{F}_{n} \left(  x, y\right)  \frac{t^{n}}{n!}
&  =\frac{y}{x+y}\frac{1}{\left(1- \frac{x}{x+y} e^{ty}\right)}\nonumber\\
&  =\frac{y}{x+y}{\displaystyle\sum\limits_{k\geq 0}}
\left(\frac{x}{x+y} \right)^{k}
 \left(e^{ty}\right)^{k}
\nonumber\\
&  =\frac{y}{x+y}{\displaystyle\sum\limits_{k\geq 0}}
\left(\frac{x}{x+y} \right)^{k}
 {\displaystyle\sum\limits_{n\geq 0}} (ky)^{n}\frac{ t^{n}}{n!}
 \nonumber\\
&  =\frac{y}{x+y}{\displaystyle\sum\limits_{n\geq 0}}
\left({\displaystyle\sum\limits_{k\geq 0}}
\left(\frac{x}{x+y} \right)^{k}
  (ky)^{n}\right)\frac{ t^{n}}{n!}.\nonumber
\end{align}
Equating $\frac{t^{n}}{n!}$ and by comparing with (\ref{highermomonts}), we obtain the result.
\end{proof}

\section{Degenerate generalized Fubini polynomials}
For any nonzero real number $\lambda$, we define the degenerate generalized Fubini polynomials as
\begin{align}
\frac{1}{1-\frac{x}{y}((1+\lambda ty)^{\frac{1}{\lambda}}-1)}
&  =
{\displaystyle\sum\limits_{n\geq 0}}
\mathfrak{F}_{n, \lambda} \left(  x, y\right)  \frac{t^{n}}{n!}.
 \label{genefuneneratedeg(x,y)2var}
\end{align}
It is clear that $\displaystyle\lim_{\lambda\rightarrow 0}(1+\lambda ty)^{\frac{1}{\lambda}}=e^{ty}\displaystyle$
and therefore $\displaystyle\lim_{\lambda \rightarrow 0}\mathfrak{F}_{n, \lambda} \left(  x, y\right)=\mathfrak{F}_{n} \left(  x, y\right)$.\par
Now, recall that the degenerate Stirling numbers of the second kind $\genfrac{\{}{\}}{0pt}{}{n}{k}_{\lambda}$, are defined by the following generating function \cite{Carlitz79}
\begin{align}
\frac{1}{k!}
\left(
(1+\lambda t)^{\frac{1}{\lambda}}-1
\right)^{k}
  &  =
{\displaystyle\sum\limits_{n\geq k}}
\genfrac{\{}{\}}{0pt}{}{n}{k}_{\lambda}  \frac{t^{n}}{n!}.
 \label{genefuneneratedegstirling}
\end{align}
In the next result, we will give the explicit formula for $\mathfrak{F}_{n,\lambda} \left(  x, y\right)$.
\begin{theorem} For $n\geq 0$, we have
\begin{align}
\mathfrak{F}_{n,\lambda} \left(  x, y\right)   = {\displaystyle\sum\limits_{k=0}^{n}}\genfrac{\{}{\}}{0pt}{}{n}{k}_{\lambda}
k!x^{k}y^{n-k}. \label{explicitFubinidegenerate}
\end{align}
\end{theorem}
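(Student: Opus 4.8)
The plan is to mirror the derivation of the non-degenerate explicit formula \eqref{explicitFubini}, replacing the exponential $e^{ty}$ by the degenerate analogue $(1+\lambda ty)^{1/\lambda}$ and the ordinary Stirling numbers of the second kind by their degenerate counterparts $\genfrac{\{}{\}}{0pt}{}{n}{k}_{\lambda}$. Concretely, I would start from the defining generating function \eqref{genefuneneratedeg(x,y)2var} and expand its right-hand side as a geometric series in $\frac{x}{y}\big((1+\lambda ty)^{1/\lambda}-1\big)$, obtaining
\[
\frac{1}{1-\frac{x}{y}\big((1+\lambda ty)^{1/\lambda}-1\big)} = \sum_{k\geq 0}\left(\frac{x}{y}\right)^{k}\big((1+\lambda ty)^{1/\lambda}-1\big)^{k}.
\]

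Next I would feed in the generating function \eqref{genefuneneratedegstirling} for the degenerate Stirling numbers. Replacing the formal variable $t$ by $ty$ there (which leaves the parameter $\lambda$ untouched) gives
\[
\big((1+\lambda ty)^{1/\lambda}-1\big)^{k} = k!\sum_{n\geq k}\genfrac{\{}{\}}{0pt}{}{n}{k}_{\lambda}\,y^{n}\frac{t^{n}}{n!}.
\]
Substituting this into the series above and interchanging the order of summation over $k$ and $n$ (the inner sum becoming finite, $0\leq k\leq n$, since $\genfrac{\{}{\}}{0pt}{}{n}{k}_{\lambda}=0$ for $k>n$) collects the coefficient of $\frac{t^{n}}{n!}$ as $\sum_{k=0}^{n}\genfrac{\{}{\}}{0pt}{}{n}{k}_{\lambda}\,k!\,x^{k}y^{n-k}$, where the factor $(x/y)^{k}$ combines with $y^{n}$ to produce $x^{k}y^{n-k}$.

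Finally, comparing this coefficient with the left-hand side of \eqref{genefuneneratedeg(x,y)2var} yields \eqref{explicitFubinidegenerate}. I do not expect a genuine obstacle here: the argument is a routine generating-function manipulation, and the only point requiring care is the substitution $t\mapsto ty$ in \eqref{genefuneneratedegstirling}, where one must check that rescaling the variable does not alter $\lambda$ (it does not, since $\lambda$ enters only as a fixed parameter multiplying the variable), so that the $y$-powers are bookkept correctly and the interchange of the two summations is legitimate.
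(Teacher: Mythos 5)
Your proposal is correct and follows essentially the same route as the paper's own proof: expand \eqref{genefuneneratedeg(x,y)2var} as a geometric series, substitute $t\mapsto ty$ in \eqref{genefuneneratedegstirling}, interchange the summations, and compare coefficients of $\frac{t^{n}}{n!}$. No meaningful difference in approach or any gap to report.
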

\begin{proof}
From (\ref{genefuneneratedeg(x,y)2var}), we note that
\begin{align}
{\displaystyle\sum\limits_{n\geq 0}}
\mathfrak{F}_{n,\lambda} \left(  x, y\right)  \frac{t^{n}}{n!}   &  = \frac{1}{1-\frac{x}{y}((1+\lambda t y)^{\frac{1}{\lambda}}-1)}\\
&  ={\sum\limits_{k\geq0}}
\left(\frac{x}{y}\right)^{k}\left((1+\lambda t y)^{\frac{1}{\lambda}}-1\right)^{k}\nonumber\\
&={\sum\limits_{k\geq0}}
\left(\frac{x}{y}\right)^{k}k!
{\displaystyle\sum\limits_{n\geq k}}
\genfrac{\{}{\}}{0pt}{}{n}{k}_{\lambda}  \frac{\left(ty \right)^{n}}{n!}\nonumber\\
&={\sum\limits_{n\geq0}}\left( {\displaystyle\sum\limits_{k=0}^{n}}
\genfrac{\{}{\}}{0pt}{}{n}{k}_{\lambda}k! x^{k} y^{n-k}\right) \frac{t ^{n}}{n!}.\nonumber
\end{align}
Equating $\frac{t^{n}}{n!}$, we obtain the result.
\end{proof}
\begin{remark}
Now, by setting $y=1$ in \eqref{genefuneneratedeg(x,y)2var}, we get
\begin{align}
\frac{1}{1-x((1+\lambda t)^{\frac{1}{\lambda}}-1)}   &  =
{\displaystyle\sum\limits_{n\geq 0}}
\mathfrak{F}_{n,\lambda} \left(  x, 1\right)  \frac{t^{n}}{n!}\nonumber\\
&  ={\sum\limits_{n\geq0}}\omega_{n,\lambda}\left(  x\right)  \dfrac{t^{n}}{n!}\nonumber,\label{genefunFubbini}
\end{align}

where $\omega_{n,\lambda}\left(  x\right)  $ denotes the  Fubini polynomials \cite{kim2017note}, defined by%
\[
\omega_{n,\lambda}\left(  x\right)  :=
{\displaystyle\sum_{k=0}^{n}}
\genfrac{\{}{\}}{0pt}{}{n}{k}_{\lambda}k!
x^{k}.
\]
\end{remark}

\bibliographystyle{abbrv}
\bibliography{Mech}

\end{document}